\documentclass[english]{article}
\usepackage[T1]{fontenc}
\usepackage[latin1]{inputenc}
\usepackage{geometry}
\geometry{verbose,tmargin=0cm,bmargin=4cm,lmargin=2cm,rmargin=2cm}
\usepackage{float}
\usepackage{mathrsfs}
\usepackage{amsmath}
\usepackage{amssymb}
\usepackage{graphicx}
\usepackage{subfigure}
\usepackage[textsize=tiny]{todonotes}

\makeatletter

\floatstyle{ruled}
\newfloat{algorithm}{tbp}{loa}
\providecommand{\algorithmname}{Algorithm}
\floatname{algorithm}{\protect\algorithmname}
\newcommand{\red}[1]{{\color{red} #1}}



\usepackage{amsthm}

\usepackage{mathrsfs}

\usepackage{amsfonts}

\usepackage{epsfig}

\usepackage{mathrsfs}

\usepackage{enumerate}

%




\textwidth   5.5in \textheight  9in \oddsidemargin  0.3in
\evensidemargin 0.3in \topmargin 0in
\newtheorem{theorem}{Theorem}[section]
\newtheorem{lem}{Lemma}[section]
\newtheorem{rem}{Remark}[section]
\newtheorem{prop}{Proposition}[section]

\newcounter{hypA}
\newenvironment{hypA}{\refstepcounter{hypA}\begin{itemize}
  \item[({\bf A\arabic{hypA}})]}{\end{itemize}}

\newcounter{hypB}
\newenvironment{hypB}{\refstepcounter{hypB}\begin{itemize}
  \item[({\bf B\arabic{hypB}})]}{\end{itemize}}

\newcommand{\bbE}{\mathbb{E}}

\usepackage{xspace}
\usepackage{tabu}
\usepackage{booktabs}

\usepackage{babel}\date{}

\usepackage{babel}

\makeatother

\begin{document}

\begin{center}

{\Large \textbf{Markov chain Simulation for Multilevel Monte Carlo}}

\vspace{0.5cm}

BY AJAY JASRA$^{1}$, KODY LAW$^{2}$, \& YAXIAN XU$^{3}$ 

{\footnotesize $^{1,3}$Department of Statistics \& Applied Probability,
National University of Singapore, Singapore, 117546, SG.}
{\footnotesize E-Mail:\,}\texttt{\emph{\footnotesize staja@nus.edu.sg}, \emph{\footnotesize a0078115@u.nus.edu}}\\
{\footnotesize $^{2}$School of Mathematics,
University of Manchester, Manchester, M13 9PL, UK.}
{\footnotesize E-Mail:\,}\texttt{\emph{\footnotesize kodylaw@gmail.com}}
\end{center}

\begin{abstract}
This paper considers a new approach to using Markov chain Monte Carlo (MCMC) in contexts where one may adopt
multilevel (ML) Monte Carlo. The underlying problem is to approximate expectations w.r.t.~an
underlying probability measure that is associated to a continuum problem, such as a continuous-time stochastic process.
It is then assumed that the associated probability measure can
only be used (e.g.~sampled) under a discretized approximation. In such scenarios, it is known that to achieve a target error, the computational
effort can be reduced when using MLMC relative to exact sampling from the most accurate discretized probability. 
The ideas rely upon introducing hierarchies of the discretizations where
less accurate approximations cost less
to compute, 
and using an appropriate collapsing 
sum expression for the target expectation. 
If a suitable coupling of the 
probability measures in the hierarchy is achieved, then 
a reduction in cost is possible.
This article focused on the case where 
exact sampling from such coupling is not possible. 
We show that one can construct suitably coupled MCMC kernels when given only access to MCMC kernels
which are invariant with respect to each discretized probability measure.
We prove, under assumptions,
that this coupled MCMC approach in a ML context 
can reduce the cost to achieve a given error, relative to exact sampling. 
Our approach is illustrated on a numerical example.\\
\textbf{Key words:} Multilevel Monte Carlo, Markov chain Monte Carlo, Bayesian Inverse Problems
\end{abstract}

\section{Introduction}

Consider a probability measure $\pi$ on a measurable space $(\mathsf{X},\mathcal{X})$. 
For a collection of $\pi-$integrable and measurable functions $\varphi:\mathsf{X}\rightarrow\mathbb{R}$,
we are interested in computing expectations:
$$
\pi(\varphi) := \int_{\mathsf{X}} \varphi(x)\pi(dx).
$$
It is assumed that the exact value of $\pi(\varphi)$ is not available analytically and must be approximated numerically: one such approach is the Monte Carlo method, 
which is focused upon in this article. 

There is an additional complication in the context of this paper; we assume that the probability measure is not available, even for simulation, and must be approximated. More precisely, we
assume that $\pi$ is associated to a continuum problem, such as Bayesian inverse problems
(e.g.~\cite{hoan:14,stuart}), where one cannot evaluate the target exactly. 
As an example, $\pi$ may be associated
to the solution of a partial differential equation (PDE) which needs to be approximated numerically.
Such contexts arise in a wide variety of real applications; see \cite{law2015data, stuart, oliver} and the references therein.
Thus, we assume that there exists a discretized approximation of $\pi$ say $\pi_L$ on $(\mathsf{X},\mathcal{X})$, where $L$ is a potentially vector-valued parameter which controls the quality of the approximation
and also associates to the cost of computations w.r.t.~$\pi_L$. In other words, suppose $L\in\mathbb{N}_0$ and that for $\varphi:\mathsf{X}\rightarrow\mathbb{R}$ $\pi,\pi_L-$integrable
$$
\lim_{L\rightarrow\infty}|\pi_L(\varphi)-\pi(\varphi)| = 0
$$
where $\pi_L(\varphi) = \int_{\mathsf{X}} \varphi(x)\pi_L(dx)$ and the cost associated to computing $\pi_L$ also grows (without bound) with $L$. Examples of practical models
with such properties can be found in \cite{beskos,hoan:12}.

In the context outlined above, if exact sampling from $\pi_L$ is possible, then one solution to approximating $\pi_L(\varphi)$ is the use of the Monte Carlo
method, by sampling i.i.d.~from $\pi_L$ and using the approximation $\frac{1}{N}\sum_{i=1}^{N}\varphi(X_i)$, where $X_i\stackrel{\textrm{i.i.d.}}{\sim}\pi_L$.
It is well known that such a method can be improved upon using multilevel \cite{giles,giles1,hein} or multi-index \cite{mimc} Monte Carlo (MIMC) methods; we focus upon the former in this introduction, but note
that our subsequent remarks broadly apply to the latter. The basic notion of the MLMC method is to introduce a collapsing sum representation
$$
\pi_L(\varphi) = \pi_0(\varphi) + \sum_{l=1}^L [\pi_l-\pi_{l-1}](\varphi)
$$
where  $\pi_0,\dots,\pi_L$ is a hierarchy of probability measures on $(\mathsf{X},\mathcal{X})$, which are approximations of $\pi$, 
starting with the very inaccurate and computationally cheap $\pi_0$ and up to the most precise 
and most computationally expensive $\pi_L$
and $\varphi:\mathsf{X}\rightarrow\mathbb{R}$ is assumed to be integrable w.r.t.~each of the probability measures. The idea is then, if one can sample $\pi_0$ exactly and a sequence
of (dependent) couplings of $(\pi_{l-1},\pi_l)$ exactly, then it is possible to reduce the cost, relative to i.i.d.~sampling from $\pi_L$ to achieve a prespecified mean square error (MSE). That is, writing $\mathbb{E}$ as an
expectation w.r.t.~the algorithm which approximates $\pi(\varphi)$, with an estimate $\widehat{\pi}(\varphi)$, the MSE is $\mathbb{E}[(\widehat{\pi}(\varphi)-\pi(\varphi))^2]$. The key to the cost reduction is sampling
from a coupling of $(\pi_{l-1},\pi_l)$ which is `good enough'; see \cite{giles,giles1} for details.

In this article, we focus on the scenario where one can only hope to sample $\pi_0,\dots,\pi_L$ using some Markov chain method such as MCMC. In such scenarios it can be non-trivial to sample 
from good couplings of $(\pi_{l-1},\pi_l)$. There has been substantial work on this, including MCMC \cite{scheichlmlmcmc2013,hoan:12,jasra,ourmimcmc} and sequential Monte Carlo (SMC) \cite{beskos,beskos1,delm_mlnc,mlpf}; see \cite{ml_rev}  for a review of these ideas.
In the context of interest, most of the methods used \cite{beskos,beskos1,delm_mlnc,hoan:12,jasra,ourmimcmc} rely upon replacing, to some extent, coupling with importance sampling (the only exception to our knowledge is 
\cite{scheichlmlmcmc2013}) and then performing the relevant sampling from some appropriate sequence of change of measures using MCMC or SMC. These procedures often require some `good' change of measures, just as
good couplings are required in MLMC. 
In the MIMCMC case, no exact proof of the improvement brought about by using MIMCMC is given (\cite{ourmimcmc} only provide a proof for a simplified identity). 

The main motivation
of the methodology we introduce is to provide an approach which only requires a simple (and reasonable) MCMC algorithm to sample $\pi_0,\dots,\pi_L$. 
We focus on approximating $[\pi_l-\pi_{l-1}](\varphi)$ independently for each $1\leq l \leq L$.
Our approach does not seem to be used in the ML or MI
literature: representing the Markov chain transitions invariant w.r.t.~$\pi_{l-1}$ and $\pi_l$ as an iterated map, one can simply couple the simulation of simple random variables that are used commonly in the sampling of the
iterated map for $\pi_{l-1}$ and $\pi_l$ respectively (this is defined explicitly in Section \ref{sec:methods}). It is straightforward
to establish that such an approach can provide consistent estimates of $[\pi_l-\pi_{l-1}](\varphi)$. We also remark that many well known Markov transitions such as Metropolis-Hastings or deterministic scan Gibbs samplers can
be represented as an iterated map.
The main issue is to establish that such a coupled approximation can be useful, in the sense that there
is a reduction in cost, relative to MCMC from $\pi_L$, to achieve a prespecified MSE. We show that under appropriate assumptions, that this can indeed be the case. 
The approach discussed in this paper is generally best used in the case where the MCMC kernel is \emph{rejection free}, such as a Gibbs sampler or some non-reversible MCMC algorithms \cite{bouncy}. The basic idea outlined here can also be easily extended to the
context where MIMC can be beneficial, but both the implementation and mathematical analysis are left to future work.

This article is structured as follows. In Section \ref{sec:methods} we describe our method. In Section \ref{sec:theory} we give our mathematical results. In Section \ref{sec:numerics}
we provide a numerical examples illustrating our method. 
A summary is provided in Section \ref{sec:summary}.
Mathematical results are given in appendices \ref{app:var_res} and \ref{app:verify}. 

\section{Methodology}\label{sec:methods}

\subsection{Notations}

Let $(\mathsf{X},\mathcal{X})$ be a measurable space. 
For $\varphi:\mathsf{X}\rightarrow\mathbb{R}$ we write $\mathscr{B}_b(\mathsf{X})$
and $\textrm{Lip}(\mathsf{X})$ as the collection of bounded measurable and Lipschitz functions respectively.
For $\varphi\in\mathscr{B}_b(\mathsf{X})$, we write the supremum norm $\|\varphi\|=\sup_{x\in\mathsf{X}}|\varphi(x)|$.
For $\varphi\in\mathscr{B}_b(\mathsf{X})$, $\textrm{Osc}(\varphi)=\sup_{(x,y)\in\mathsf{X}\times\mathsf{X}}|\varphi(x)-\varphi(y)|$.
For $\varphi\in\textrm{Lip}(\mathsf{X})$, we write the Lipschitz constant 
$\|\varphi\|_{\textrm{Lip}}:=\sup_{(x,y)\in\mathsf{X}\times\mathsf{X}}\frac{|\varphi(x)-\varphi(y)|}{|x-y|}$, 
where $|\cdot|$ is used to denote the $\mathbb{L}_1-$norm.
$\mathscr{P}(\mathsf{X})$ (resp.~$\mathscr{M}(\mathcal{X})$) denotes the collection of probability measures (resp.~$\sigma-$finite measures) on $(\mathsf{X},\mathcal{X})$.
For a measure $\mu$ on $(\mathsf{X},\mathcal{X})$
and a $\varphi\in\mathscr{B}_b(\mathsf{X})$, the notation $\mu(\varphi)=\int_{\mathsf{X}}\varphi(x)\mu(dx)$ is used.
Let $K:\mathsf{X}\times\mathcal{X}\rightarrow[0,1]$ be a Markov kernel (e.g.~\cite{meyn} for a definition) and $\mu\in \mathscr{M}(\mathcal{X})$ 
and $\varphi\in\mathscr{B}_b(\mathsf{X})$, then 
we use the notations
$
\mu K(dy) = \int_{\mathsf{X}}\mu(dx) K(x,dy)
$
and for $\varphi\in\mathscr{B}_b(\mathsf{X})$, 
$
K(\varphi)(x) = \int_{\mathsf{X}} \varphi(y) K(x,dy).
$
For a Markov kernel $K$ we write the $n-$iterates
$$
K^n(x_0,dx_n) = \int_{\mathsf{X}} K^{n-1}(x_0,dx_{n-1})K(x_{n-1},dx_n) 
$$
with $K^0(x,dy)=\delta_x(dy)$.
For $\mu,\nu\in\mathscr{P}(\mathsf{X})$, the total variation distance 
is written $\|\mu-\nu\|_{\textrm{tv}}=\sup_{A\in\mathcal{X}}|\mu(A)-\nu(A)|$.
For $A\in\mathcal{X}$ the indicator is written $\mathbb{I}_A(x)$. For a sequence
$(a_n)_{n\geq 0}$  and for $0\leq k \leq n$, we use the compact notation 
$a_{k:n}=(a_k,\dots,a_n)$, with the convention that if $k>n$ the resulting vector of objects is null.

\subsection{Set-Up}

We begin by concentrating upon a sequence $(\pi_l)_{0\leq l \leq L}$, with $\pi_l\in \mathscr{P}(\mathsf{X})~\forall l\in\{1,\dots,L\}$, where $L$ can potentially increase, but is taken
as fixed.
It is assumed that there is a limiting $\pi\in \mathscr{P}(\mathsf{X})$, in the sense that for any $\varphi\in\mathscr{B}_b(\mathsf{X})$
$$
\lim_{l\rightarrow+\infty}|\pi_l(\varphi)-\pi(\varphi)| = 0.
$$
As discussed in the introduction, our objective is to approximate the ML identity:
\begin{equation}\label{eq:ml_id}
\pi_L(\varphi) = \pi_0(\varphi) + \sum_{l=1}^L [\pi_l-\pi_{l-1}](\varphi).
\end{equation}
We assume that each $\pi_l$ is associated to a scalar parameter $h_l$, 
with $1>h_0>\cdots>h_L>0$, which represents the quality of the approximation w.r.t.~$\pi$
and the associated cost.
Approximation of \eqref{eq:ml_id} can be peformed via Monte Carlo intergration, by sampling from dependent couplings of the pairs $(\pi_l,\pi_{l-1})$, independently for $1\leq l \leq L$,
and i.i.d.~sampling from $\pi_0$. We are focussed upon the scenario where exact sampling even from a given $\pi_l$ is not currently possible, but can be achieved
by sampling a $\pi_l-$invariant Markov kernel, as in MCMC.

\subsection{Approach}

\subsubsection{Multilevel Approach}

Let $0\leq l \leq L$ be given. Let $(\mathsf{U},\mathcal{U})$ be a finite-dimensional measuable space, $U$ a random variable
on $(\mathsf{U},\mathcal{U})$ with probability $\mu\in\mathscr{P}(\mathsf{U})$ and let $\xi_l:\mathsf{X}\times\mathsf{U}\rightarrow\mathsf{X}$,
such that the map $(x,u)\rightarrow\xi_{l}(x,u)$ is jointly measurable on the product $\sigma-$algebra $\mathcal{X}\vee \mathcal{U}$. Consider
the discrete-time Markov chain, for $X_0\in\mathsf{X}$ given, $n\geq 1$:
$$
X_n = \xi_l(X_{n-1},U_n)
$$
where $(U_n)_{n\geq 1}$ is a sequence of i.i.d.~random variables with probability $\mu$. Denote the associated Markov kernel $K_l(x,dy)$. We
explicitly assume that $\xi_l$ is constructed such that $K_l$ admits $\pi_l$ as an invariant measure; examples are given in Section \ref{sec:ex_iterated_map}.
Under appropriate assumptions on $\xi_l$ (e.g.~\cite{diaconis,jarner}) or $K_l$ (e.g.~\cite{meyn}), one can show that
$\|K^n(x,\dot)-\pi_l(\cdot)\|_{\textrm{tv}}$ will converge to zero as $n$ grows. In addition, under appropriate assumptions, 
for $\varphi\in\mathscr{B}_b(\mathsf{X})$,
$$
\frac{1}{N}\sum_{n=1}^N \varphi(X_n)
$$
will converge almost surely to $\pi_l(\varphi)$.

As noted, we are interested in the approximation of $[\pi_l-\pi_{l-1}](\varphi)$, $1\leq l \leq L$ as the approximation of $\pi_0(\varphi)$ is possible using the above
discussion. The approach that is proposed in this article is as follows. Let $1\leq l \leq L$ be given and set $\check{X}_0(l)=(\overline{X}_0(l),\underline{X}_0(l))=(X_0(l),X_0(l))\in\mathsf{X}\times\mathsf{X}$. Generate the Markov chain for $n\geq 1$
$$
\overline{X}_n(l) = \xi_l(\overline{X}_{n-1}(l),U_n(l)) \quad \textrm{and}\quad \underline{X}_n(l) = \xi_{l-1}(\underline{X}_{n-1}(l),U_n(l))
$$
with the notation $\check{X}_n(l)=(\overline{X}_n(l),\underline{X}_n(l))$. Note that the sequence of random numbers, $(U_n(l))_{n\geq 1}$, are the same
for the recursion of $\overline{X}_n(l)$ and $\underline{X}_n(l)$. The Markov kernel associated to the Markov chain $(\check{X}_n(l))_{n\geq 0}$
is denoted $\check{K}_{l,l-1}$.
Then if this scheme is repeated independently for each $1\leq l \leq L$ and one samples
the Markov chain $(X_n(0))_{n\geq 1}$ using the Markov kernel $K_0$ we have the following estimate of \eqref{eq:ml_id}
$$
\frac{1}{N_0}\sum_{n=1}^{N_0} \varphi(X_n(0))
+ \sum_{l=1}^L \frac{1}{N_l}\sum_{n=1}^{N_l}[\varphi(\overline{X}_n(l))-\varphi(\underline{X}_n(l))].
$$
Under appropriate assumptions on $\xi_l$ or $K_l$, one can easily prove that the above estimate is consistent. The question is whether this
can improve upon sampling from $\pi_L$ using $K_L$ (in the sense discussed in the introduction), in some scenarios of practical interest. 
This point is considered in Section \ref{sec:theory}.

A strategy for coupling MCMC is considered in \cite{sergios}, except when trying to perform unbiased estimation using the approach in \cite{rg:15}. 
That idea is different to the one 
presented in this article, and it is not analyzed in the context of variances, as is the case in this paper.

\subsubsection{Examples of $\xi_l$}\label{sec:ex_iterated_map}

There are numerous examples of mappings which fall into the framework of this article. 
Let us suppose that for any $0\leq l \leq L$ we have
$$
\pi_l(dx) = \pi_l(x) \nu(dx)
$$
with $\nu\in\mathscr{M}(\mathcal{X})$ and $\pi(x)$ a non-negative function that is known up-to a constant.

A wide class of Metropolis-Hastings kernels are one such example, which is taken from the description in \cite{chen}.
Let $x_n$ be the current state of the Markov chain. A proposal $Y=\psi_l(x_n,\red{U}_1)$ is generated. 
This may be for instance a (one-dimensional) Gaussian random walk,
where 
$\psi_l(x_n,U_1):=x_n + U_1$, $U_1\sim\mathscr{N}(0,\Sigma_l)$, 
and $\mathscr{N}(0,\Sigma_l)$ is the Gaussian distribution of mean zero and variance $\Sigma_l$.
Assuming that the proposal can be written
$$
Q_l(x,dy) = q_l(x,y)\nu(dy)
$$
for some non-negative function $q_l$ that is known, then we have the well-defined acceptance probability
$$
a_l(x,y) = \left\{\begin{array}{ll}  
1\wedge \frac{\pi_l(y)q_l(y,x)}{\pi_l(x)q_l(x,y)} & \textrm{if}~(x,y)\in R_l\\
0 & \textrm{otherwise}
\end{array}\right.
$$
where $R_l=\{(x,y):\pi_l(y)q_l(y,x)>0~\textrm{and}~\pi_l(x)q_l(x,y)>0\}$. Then, if $U_2\sim\mathscr{U}_{[0,1]}$
where $\mathscr{U}_{[0,1]}$ is the uniform distribution on $[0,1]$, (so here $U=U_{1:2}$)
$$
\xi_l(x_n,u_{1:2}) = \left\{\begin{array}{ll} 
\psi_l(x_n,u_1) & \textrm{if}~u_2< a_l(x_n,\psi_l(x_n,u_1)) \\
x_n & \textrm{otherwise}.
\end{array}\right.
$$
\cite{chen} also demonstrate that the deterministic scan Gibbs sampler falls into our framework (in fact one can 
establish that the random scan can also do so).

\begin{rem}
The approach detailed here assumes that the $\pi_l\in \mathscr{P}(\mathsf{X})~\forall l\in\{0,\dots,L\}$. However, this need
not be the case. For instance it is straightforward to extend to the case where $\pi_l\in \mathscr{P}(\mathsf{X}_l)$ and
$\mathsf{X}_0\subset \mathsf{X}_1 \subset \cdots \subset \mathsf{X}_L$. Here one would simply use the same random numbers
in the mappings $\xi_l$, on the common parts of the space. For example, if $\mathsf{X}_l = \mathbb{R}^l$, then in the random walk
example above, one simply uses the same Gaussian perturbation in the proposal on the space $\mathbb{R}^{l-1}$ and one additional
Gaussian must be sampled for the level $l$. The same uniform is used in the accept/reject step.
We note that the theory in Section \ref{sec:theory}
could be extended to the scenario we mentioned, with similar, but more complicated, arguments.
\end{rem}

\section{Theoretical Results}\label{sec:theory}

\subsection{Assumptions}

Throughout $\mathsf{X}$ is compact. Recall that $1>h_0>\cdots>h_L>0$.


\begin{hypA}\label{ass:1}
There exist $C<\infty$ and $\rho\in(0,1)$ such that for any $n\geq 1$
$$
\sup_{l\geq 0} \sup_{x\in \mathsf{X}} \|K_l^n(x,\dot)-\pi_l(\cdot)\|_{\textrm{tv}} \leq C \rho^n.
$$
\end{hypA}
\begin{hypA}\label{ass:3}
There exist a $C<\infty$ such that for any $l\geq 1$, $\varphi\in\mathscr{B}_b(\mathsf{X})\cap\textrm{Lip}(\mathsf{X})$, $(x,y)\in\mathsf{X}\times\mathsf{X}$
$$
|K_l(\varphi)(x)-K_l(\varphi)(y)| \leq C (\|\varphi\|\vee \|\varphi\|_{\textrm{Lip}})|x-y|.
$$
\end{hypA}
\begin{hypA}\label{ass:2}
There exist a $C<\infty$ and $\beta >0$ such that for any $l\geq 1$, $\varphi\in\mathscr{B}_b(\mathsf{X})$
\begin{enumerate}
\item{
$
|[\pi_l-\pi_{l-1}](\varphi)| \leq C \|\varphi\| h_l^{\beta}.
$
}
\item{
$
\sup_{x\in\mathsf{X}}|[K_l(\varphi)(x)-K_{l-1}(\varphi)(x)| \leq C  \|\varphi\| h_l^{\beta}.
$
}
\end{enumerate}
\end{hypA}
\begin{hypA}\label{ass:4}
There exist a $\tau\in(0,1)$ such that for any $l\geq 1$, $(x,y)\in\mathsf{X}\times\mathsf{X}$
$$
\int_{\mathsf{U}}|\xi_l(x,u)-\xi_l(y,u)|^2\mu(du) \leq \tau|x-y|^2.
$$
\end{hypA}
\begin{hypA}\label{ass:5}
There exist a $C<\infty$ and $\beta >0$ such that for any $l\geq 1$, $x\in\mathsf{X}$
$$
\int_{\mathsf{U}}|\xi_l(x,u)-\xi_{l-1}(x,u)|^2\mu(du) \leq C h_l^{\beta}.
$$
\end{hypA}

These assumptions are quite strong, but may be verified in practice. As $\mathsf{X}$ is compact (A\ref{ass:1}) can be verified for a wide class of
Markov kernels, see for instance \cite{meyn}. (A\ref{ass:3}-\ref{ass:2}) relate to the continuity properties of the kernel and the underlying ML problem.
In particular (A\ref{ass:2}) 2.~states that moves under pairs of kernels at consecutive levels, stay close on average, given that they are initialised at the same point.
(A\ref{ass:4}) is an assumption that has been considered by \cite{jarner} in a slightly more general context and relates to contractive properties of the iterated map.
(A\ref{ass:5}) is a map analogue of (A\ref{ass:2}) 2.. One expects that such properties are needed, in order for the coupling approach here to work well.
The compactness and uniform ergodicity assumptions could be weakened to say geometric or polynomial type ergodicity and non-compact spaces with longer,
but similar, arguments (see e.g.~\cite{andrieu}).

In general for \emph{rejection} type Markov chains, such as Metropolis-Hastings kernels, (A\ref{ass:1}), (A\ref{ass:2}) 2.~and (A\ref{ass:5}) may be in conflict.
(A\ref{ass:2}) 2.~and (A\ref{ass:5}) suggest that the moves of the kernels at consecutive levels as well as the coupling are sufficiently close and good
respectively. However, (A\ref{ass:1}) demands a reasonably fast convergence rate that is independent of the level. This latter assumption can be made
$l-$dependent, but the key point is that the mixing rate should not go to zero with $h_l$. Conversely, for (A\ref{ass:2}) 2.~and (A\ref{ass:5}), one
needs that if one level accepts and the other rejects, that the resulting positions of the chains are sufficiently close, as a function of $h_l$. For instance
for (A\ref{ass:2}) 2.~with Gaussian random walk proposals with different scalings at consecutive levels, one will typically need a scale of $\mathcal{O}(h_l^{\beta})$,
which will likely mean a reduction in the rate of convergence and possibly negate the advantage of a multilevel method. This is rather unsurprising, in that one wishes to keep consecutive levels close via coupling.

The main message is then: for rejection free algorithms such as the Gibbs sampler or some non-reversible MCMC kernels one should simply check if the couplings are `good enough' in the sense
of (A\ref{ass:5}) (one suspects that (A\ref{ass:4}) could be significantly weakened). Then one expects ML improvements. For MCMC methods with rejection, the same must be done, but,
it will not be clear (without mathematical calculations or numerical trials) that this method of implementing the coupling of MCMC will yield reductions in computational effort for a given level of MSE.

\subsection{Main Result}

The main idea is to consider 
$$
\mathbb{E}\Big[\Big(
\frac{1}{N_0}\sum_{n=1}^{N_0}[\varphi(X_n(0)) - \pi_0(\varphi)] + 
\sum_{l=1}^L\frac{1}{N_l}\sum_{n=1}^{N_l}\{[\varphi(\overline{X}_n(l))-\varphi(\underline{X}_n(l))]-[\pi_l-\pi_{l-1}](\varphi)\}
-
$$
$$
[\pi_L-\pi](\varphi)
\Big)^2\Big].
$$
This error is clearly equal to
$$
\mathbb{E}\Big[\Big(\frac{1}{N_0}\sum_{n=1}^{N_0}[\varphi(X_n(0)) - \pi_0(\varphi)]\Big)^2\Big] 
+ \sum_{l=1}^L \mathbb{E}\Big[\Big(\frac{1}{N_l}\sum_{n=1}^{N_l}\{[\varphi(\overline{X}_n(l))-\varphi(\underline{X}_n(l))]-[\pi_l-\pi_{l-1}](\varphi)\}\Big)^2\Big]
+
$$
$$
\sum_{l\neq q =1}^L \bbE [ f_l f_q ]
+ [\pi_L-\pi](\varphi)^2 \, ,
$$
where 
\begin{equation}\label{eq:bias_mcmc}
f_0 := \frac{1}{N_0}\sum_{n=1}^{N_l}[\varphi(X_n(0)) - \pi_0(\varphi)] \, ,
\end{equation}
and for $j=1,\dots, L$,
$$
f_j := \frac{1}{N_j}\sum_{n=1}^{N_j}\{[\varphi(\overline{X}_n(j))-\varphi(\underline{X}_n(j))]-[\pi_j-\pi_{j-1}](\varphi)\} \, .
$$
The terms \eqref{eq:bias_mcmc} and 
$$
\mathbb{E}\Big[\Big(\frac{1}{N_0}\sum_{n=1}^{N_0}[\varphi(X_n(0)) - \pi_0(\varphi)]\Big)^2\Big]
$$
can be treated by standard Markov chain theory for $\varphi$ in an appropriate class and under our assumptions.
That is, under (A\ref{ass:1}), one can easily prove that there exist a $C<+\infty$ such that for any $N_0\geq 1$, $\varphi\in\mathscr{B}_b(\mathsf{X})\cap\textrm{Lip}(\mathsf{X})$
$$
\Big|\mathbb{E}\Big[\Big(\frac{1}{N_0}\sum_{n=1}^{N_l}[\varphi(X_n(0)) - \pi_0(\varphi)]\Big)\Big]\Big| \leq \frac{C  (\|\varphi\|\vee \|\varphi\|_{\textrm{\textrm{Lip}}}) }{N_0}
$$
and 
$$
\mathbb{E}\Big[\Big(\frac{1}{N_0}\sum_{n=1}^{N_0}[\varphi(X_n(0)) - \pi_0(\varphi)]\Big)^2\Big] \leq \frac{C  (\|\varphi\|\vee \|\varphi\|_{\textrm{\textrm{Lip}}})^2 }{N_0}
$$
For the remaining terms, we have the following results, whose proofs can be found in Appendix \ref{app:var_res}.

\begin{prop}\label{prop:marg}
Assume (A\ref{ass:1},\ref{ass:2}). Then there exist a $C<+\infty$ such that for any $l\geq 1$, $N_l\geq 1$, $\varphi\in\mathscr{B}_b(\mathsf{X})\cap\textrm{\emph{Lip}}(\mathsf{X})$:
$$
\Big|\mathbb{E}\Big[\Big(\frac{1}{N_l}\sum_{n=1}^{N_l}\{[\varphi(\overline{X}_n(l))-\varphi(\underline{X}_n(l))]-[\pi_l-\pi_{l-1}](\varphi)\}\Big)\Big]\Big| \leq \frac{C  (\|\varphi\|\vee \|\varphi\|_{\textrm{\textrm{\emph{Lip}}}}) h_l^{\beta}}{N_l}.
$$
\end{prop}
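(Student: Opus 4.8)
The plan is to note first that the \emph{expectation} of the estimator depends only on the marginal laws of the two chains, so that the coupling is irrelevant here and only (A\ref{ass:1}) and (A\ref{ass:2}) enter. Conditioning on the common start $x_0=X_0(l)$, the sequence $(\overline{X}_n(l))$ is marginally a $K_l$-chain and $(\underline{X}_n(l))$ a $K_{l-1}$-chain, both from $x_0$, whence $\mathbb{E}[\varphi(\overline{X}_n(l))]=K_l^n(\varphi)(x_0)$ and $\mathbb{E}[\varphi(\underline{X}_n(l))]=K_{l-1}^n(\varphi)(x_0)$. Thus the object to bound equals $\frac{1}{N_l}\sum_{n=1}^{N_l}g_n$ with $g_n:=K_l^n(\varphi)(x_0)-K_{l-1}^n(\varphi)(x_0)-[\pi_l-\pi_{l-1}](\varphi)$, all estimates being uniform in $x_0\in\mathsf{X}$ by (A\ref{ass:1}). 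It therefore suffices to establish the summability bound $\sum_{n\geq 1}|g_n|\leq C\|\varphi\| h_l^{\beta}$ uniformly in $N_l$; dividing by $N_l$ then yields the claim, and since $\|\varphi\|\leq\|\varphi\|\vee\|\varphi\|_{\textrm{Lip}}$ the stated form follows (in fact only $\|\varphi\|$ is needed).

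The crux is that the two obvious bounds on $g_n$ are each individually too weak: the ergodicity estimate (A\ref{ass:1}) gives $|g_n|\leq C\|\varphi\|\rho^n$ (geometric decay but no $h_l^{\beta}$), while the telescoping identity $K_l^n-K_{l-1}^n=\sum_{k=0}^{n-1}K_l^{\,n-1-k}(K_l-K_{l-1})K_{l-1}^k$ combined with (A\ref{ass:2})2.\ gives $|g_n|\leq C\|\varphi\| h_l^{\beta}$ (the right $h_l^{\beta}$ factor but no decay, as each of the $n$ summands is $O(h_l^{\beta})$). To get both at once I would center: set $\psi:=\varphi-\pi_l(\varphi)$ and $c_l:=\pi_{l-1}(\varphi)-\pi_l(\varphi)$, so $|c_l|\leq C\|\varphi\| h_l^{\beta}$ by (A\ref{ass:2})1.\ and, since $K_l,K_{l-1}$ fix constants, $g_n=[K_l^n-K_{l-1}^n]\psi(x_0)+c_l$. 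Applying the telescoping to $\psi$, the increments are $e_k:=(K_l-K_{l-1})(K_{l-1}^k\psi-c_l)$ (the constant $c_l=\pi_{l-1}(\psi)$ being annihilated by $K_l-K_{l-1}$); since $K_{l-1}^k\psi-c_l=K_{l-1}^k\psi-\pi_{l-1}(\psi)$ contracts geometrically by (A\ref{ass:1}), (A\ref{ass:2})2.\ delivers the decisive bound $\|e_k\|\leq C\|\varphi\| h_l^{\beta}\rho^k$, carrying the $h_l^{\beta}$ factor \emph{and} geometric decay simultaneously.

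With this in hand I would split $K_l^{\,n-1-k}e_k(x_0)=\pi_l(e_k)+[K_l^{\,n-1-k}e_k(x_0)-\pi_l(e_k)]$. The fluctuation is controlled by (A\ref{ass:1}) (via $\textrm{Osc}(e_k)\leq 2\|e_k\|$) as $O(\|e_k\|\rho^{n-1-k})=O(\|\varphi\| h_l^{\beta}\rho^{n-1})$, summing over $k$ to $O(\|\varphi\| h_l^{\beta}\,n\rho^{n-1})$; the leading part telescopes, $\sum_{k=0}^{n-1}\pi_l(e_k)=\pi_l(\psi)-\pi_l(K_{l-1}^n\psi)=-\pi_l(K_{l-1}^n\psi)$ using $\pi_lK_l=\pi_l$ and $\pi_l(\psi)=0$. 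Writing $\pi_l(K_{l-1}^n\psi)-c_l=[\pi_l-\pi_{l-1}](K_{l-1}^n\psi-c_l)$ (as $c_l=\pi_{l-1}(K_{l-1}^n\psi)$ and $[\pi_l-\pi_{l-1}](c_l)=0$) and applying (A\ref{ass:2})1.\ to the geometrically small $K_{l-1}^n\psi-c_l$ gives $|\pi_l(K_{l-1}^n\psi)-c_l|\leq C\|\varphi\| h_l^{\beta}\rho^n$. Collecting terms, $g_n=[K_l^n-K_{l-1}^n]\psi(x_0)+c_l$ obeys $|g_n|\leq C\|\varphi\| h_l^{\beta}(\rho^n+n\rho^{n-1})$, and summing the convergent series $\sum_{n\geq 1}(\rho^n+n\rho^{n-1})=\rho/(1-\rho)+1/(1-\rho)^2$ yields $\sum_{n\geq 1}|g_n|\leq C\|\varphi\| h_l^{\beta}$, which divided by $N_l$ is the assertion.

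I expect the main obstacle to be precisely this simultaneous extraction of the $h_l^{\beta}$ smallness and the geometric decay in $n$: the two routine estimates each miss one factor, and the argument hinges entirely on the centering by $c_l$, which forces the difference operators $K_l-K_{l-1}$ and $\pi_l-\pi_{l-1}$ to act on mean-zero, geometrically contracting functions rather than on $\varphi$ directly. The remaining points — that (A\ref{ass:1}) may be applied for all $n\geq 0$ after enlarging $C$, that $\|K_{l-1}^k\psi\|\leq 2\|\varphi\|$, and the handling of a possibly random $X_0(l)$ via the uniform-in-$x_0$ bounds — are routine bookkeeping.
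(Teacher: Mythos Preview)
Your proposal is correct and is essentially the same argument as the paper's. The paper first reduces to $g_n=K_l^n(\varphi)(x_0)-K_{l-1}^n(\varphi)(x_0)-[\pi_l-\pi_{l-1}](\varphi)$ and then invokes a decomposition from \cite[Proposition~C.2]{andrieu} (exactly the one used in Lemma~\ref{lem:poisson_eq_cont}) together with the bounds \eqref{eq:prf2}--\eqref{eq:prf3}; your centering-plus-telescoping derivation reproduces precisely that identity (your $e_k$ equals $[K_l-K_{l-1}]([K_{l-1}^k-\pi_{l-1}]\varphi)$ and your second term equals $[\pi_l-\pi_{l-1}]([K_{l-1}^n-\pi_{l-1}]\varphi)$, matching the paper after the reindexing $i=n-1-k$), and your final bound $|g_n|\leq C\|\varphi\|h_l^{\beta}(n\rho^{n-1}+\rho^n)$ is identical.
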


\begin{theorem}\label{theo:main_thm}
Assume (A\ref{ass:1}-\ref{ass:5}). Then there exist a $C<+\infty$ such that for any $l\geq 1$, $N_l\geq 1$, $\varphi\in\mathscr{B}_b(\mathsf{X})\cap\textrm{\emph{Lip}}(\mathsf{X})$:
$$
\mathbb{E}\Big[\Big(\frac{1}{N_l}\sum_{n=1}^{N_l}\{[\varphi(\overline{X}_n(l))-\varphi(\underline{X}_n(l))]-[\pi_l-\pi_{l-1}](\varphi)\}\Big)^2\Big] \leq \frac{C  (\|\varphi\|\vee \|\varphi\|_{\textrm{\textrm{\emph{Lip}}}})^2 h_l^{\beta}}{N_l}.
$$
\end{theorem}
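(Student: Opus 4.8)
The plan is to expand the square of the centred sum into its diagonal and off-diagonal parts and treat them separately. Write $\Delta_n:=\overline{X}_n(l)-\underline{X}_n(l)$, $D_n:=\varphi(\overline{X}_n(l))-\varphi(\underline{X}_n(l))$ and $G_n:=D_n-[\pi_l-\pi_{l-1}](\varphi)$, so the quantity of interest is $N_l^{-2}\sum_{n,m}\mathbb{E}[G_nG_m]$. Splitting into $n=m$ and $n\neq m$, the target bound $C(\|\varphi\|\vee\|\varphi\|_{\textrm{Lip}})^2h_l^{\beta}/N_l$ follows once I show (i) $\sup_n\mathbb{E}[G_n^2]\le C(\|\varphi\|\vee\|\varphi\|_{\textrm{Lip}})^2h_l^{\beta}$ for the $N_l$ diagonal terms, and (ii) $\sum_{n\neq m}|\mathbb{E}[G_nG_m]|\le CN_l(\|\varphi\|\vee\|\varphi\|_{\textrm{Lip}})^2h_l^{\beta}$ for the off-diagonal terms.

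For (i) the key step is a uniform-in-$n$ second-moment bound on the discrepancy $\Delta_n$ of the two coupled chains, namely $\sup_n\mathbb{E}[|\Delta_n|^2]\le Ch_l^{\beta}$. Conditioning on the natural filtration $\mathcal{F}_{n-1}$ of $(\check{X}_m)_{m\le n-1}$ and integrating the common driving noise $U_n\sim\mu$ gives $\mathbb{E}[|\Delta_n|^2\mid\mathcal{F}_{n-1}]=\int_{\mathsf{U}}|\xi_l(\overline{X}_{n-1},u)-\xi_{l-1}(\underline{X}_{n-1},u)|^2\mu(du)$. Inserting $\pm\,\xi_l(\underline{X}_{n-1},u)$ and applying Minkowski's inequality in $L^2(\mu)$ together with (A\ref{ass:4}) and (A\ref{ass:5}) bounds this by $(\sqrt{\tau}\,|\Delta_{n-1}|+\sqrt{Ch_l^{\beta}})^2$; a Young-type splitting of the cross term (legitimate since $\tau<1$) then yields the affine recursion $\mathbb{E}[|\Delta_n|^2]\le\tau'\mathbb{E}[|\Delta_{n-1}|^2]+C'h_l^{\beta}$ for some $\tau'\in(0,1)$, which with $\Delta_0=0$ gives the claimed geometric-series bound. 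Since $\varphi$ is Lipschitz, $|D_n|\le\|\varphi\|_{\textrm{Lip}}|\Delta_n|$, and since $|[\pi_l-\pi_{l-1}](\varphi)|\le C\|\varphi\|h_l^{\beta}$ by part 1 of (A\ref{ass:2}), I obtain $\mathbb{E}[G_n^2]\le C(\|\varphi\|\vee\|\varphi\|_{\textrm{Lip}})^2h_l^{\beta}$ and, by Cauchy--Schwarz, $\mathbb{E}|G_n|\le C(\|\varphi\|\vee\|\varphi\|_{\textrm{Lip}})h_l^{\beta/2}$.

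For (ii) I condition on $\check{X}_n$ and use the Markov property: for $k\ge 1$, $\mathbb{E}[G_{n+k}\mid\check{X}_n]=K_l^k(\varphi)(\overline{X}_n)-K_{l-1}^k(\varphi)(\underline{X}_n)-[\pi_l-\pi_{l-1}](\varphi)$, which I split as $(I)+(II)$ with $(I)=K_l^k(\varphi)(\overline{X}_n)-K_l^k(\varphi)(\underline{X}_n)$ and $(II)=[K_l^k-K_{l-1}^k](\varphi)(\underline{X}_n)-[\pi_l-\pi_{l-1}](\varphi)$. Term $(I)$ is handled by noting that (A\ref{ass:4}) and Cauchy--Schwarz give the Lipschitz contraction $\|K_l^k\varphi\|_{\textrm{Lip}}\le\tau^{k/2}\|\varphi\|_{\textrm{Lip}}$, so $|(I)|\le\tau^{k/2}\|\varphi\|_{\textrm{Lip}}|\Delta_n|$ and hence $|\mathbb{E}[G_n\,(I)]|\le\tau^{k/2}\|\varphi\|_{\textrm{Lip}}(\mathbb{E}G_n^2)^{1/2}(\mathbb{E}\Delta_n^2)^{1/2}\le C\tau^{k/2}(\|\varphi\|\vee\|\varphi\|_{\textrm{Lip}})^2h_l^{\beta}$, which is summable in $k$ and already carries the full factor $h_l^{\beta}$ because both $G_n$ and $\Delta_n$ contribute $h_l^{\beta/2}$. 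Term $(II)$ is where both a factor $h_l^{\beta}$ and decay in $k$ must be produced: the telescoping identity $K_l^k-K_{l-1}^k=\sum_{j=0}^{k-1}K_l^{k-1-j}(K_l-K_{l-1})K_{l-1}^{j}$ with part 2 of (A\ref{ass:2}) gives $\|(II)\|\le Ck\|\varphi\|h_l^{\beta}$, while rewriting $(II)=[K_l^k\varphi-\pi_l(\varphi)](\underline{X}_n)-[K_{l-1}^k\varphi-\pi_{l-1}(\varphi)](\underline{X}_n)$ and invoking the uniform ergodicity (A\ref{ass:1}) gives $\|(II)\|\le C\|\varphi\|\rho^k$.

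I would then combine these two estimates for $(II)$ through $\min\{Ckh_l^{\beta},C\rho^k\}\le C\,h_l^{\beta/2}\sqrt{k}\,\rho^{k/2}$, so that $\sum_{k\ge 1}\|(II)\|\le C\|\varphi\|h_l^{\beta/2}$ and therefore $\sum_{k\ge 1}|\mathbb{E}[G_n\,(II)]|\le\mathbb{E}|G_n|\sum_{k\ge1}\|(II)\|\le C(\|\varphi\|\vee\|\varphi\|_{\textrm{Lip}})^2h_l^{\beta}$. Summing the $(I)$ and $(II)$ contributions over $k$ and then over $n\in\{1,\dots,N_l\}$, with the symmetric factor $2$, gives (ii), and assembling (i) and (ii) yields the stated bound. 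The main obstacle is precisely term $(II)$: a crude bound using (A\ref{ass:1}) alone only produces a factor $h_l^{\beta/2}$ in the off-diagonal sum, which is insufficient; it is the interpolation between the linear-in-$k$ bound that carries the full $h_l^{\beta}$ and the geometric bound that carries the decay which recovers the correct rate, and verifying that the resulting series is summable while retaining the full $h_l^{\beta}$ is the crux of the argument.
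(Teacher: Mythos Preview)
Your argument is correct, and it proceeds along a genuinely different line from the paper's proof. The paper does not expand the square into diagonal and off-diagonal covariances; instead it passes through the Poisson equation. Writing $\widehat{\varphi}_l=\sum_{n\geq 0}[K_l^n(\varphi)-\pi_l(\varphi)]$, the centred sum is rewritten as a martingale
\[
M_{N_l}=\sum_{n=1}^{N_l}\bigl[\widehat{\varphi}_l(\overline{X}_n)-K_l(\widehat{\varphi}_l)(\overline{X}_{n-1})-\widehat{\varphi}_{l-1}(\underline{X}_n)+K_{l-1}(\widehat{\varphi}_{l-1})(\underline{X}_{n-1})\bigr]
\]
plus a telescoping boundary remainder, and $\mathbb{E}[M_{N_l}^2]$ is bounded via Burkholder--Davis--Gundy. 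The crux there is a separate lemma establishing $\sup_x|\widehat{\varphi}_l(x)-\widehat{\varphi}_{l-1}(x)|\leq C\|\varphi\|h_l^{\beta}$, proved by a resolvent-type identity for $K_l^n-K_{l-1}^n$ together with (A1) and (A3); this is then combined with Lipschitz control of $\widehat{\varphi}_{l-1}$ (from (A2)) and your same $\mathbb{E}|\Delta_n|^2\leq Ch_l^{\beta}$ estimate to bound the martingale increments.

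Your route is more elementary: no Poisson equation, no martingale inequality, and you never invoke (A2) since the Lipschitz contraction $\|K_l^k\varphi\|_{\textrm{Lip}}\leq\tau^{k/2}\|\varphi\|_{\textrm{Lip}}$ comes straight from (A4). The one device the paper does not use is your interpolation $\min\{Ckh_l^{\beta},C\rho^k\}\leq C\sqrt{k}\,\rho^{k/2}h_l^{\beta/2}$ for term $(II)$, which cleverly extracts a summable-in-$k$ factor while retaining $h_l^{\beta/2}$, to be paired with the $h_l^{\beta/2}$ in $\mathbb{E}|G_n|$. The paper's Poisson-equation machinery, on the other hand, is the more standard template in MCMC variance analysis and would likely transfer more readily to settings with only geometric or sub-geometric ergodicity, where the interpolated bound on $(II)$ may degrade but the Poisson solution can still be controlled via Lyapunov-function methods.
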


\begin{rem}
The case where $\varphi$ depends upon $l$, i.e.~$\varphi_l:\mathsf{X}\rightarrow\mathbb{R}$ could also be treated using the approach in Appendix \ref{app:var_res}. This would require some
additional calculations, but we believe a similar result to Theorem \ref{theo:main_thm} would hold on some minor additional assumptions on $\varphi_l$.
\end{rem}

\subsection{Verifying the Assumptions}

We consider a deterministic scan Gibbs sampler. We set $\mathsf{X}=\bigotimes_{i=1}^k \mathsf{X}_i$, $\mathcal{X}=\bigvee_{i=1}^k \mathcal{X}_i$ and remark that $\mathsf{X}_i$ need
not be a one-dimensional space (but must be compact). We set for $0\leq l \leq L$
$$
\pi_l(dx_{1:k}) = \pi_l(x_{1:k})\nu(dx_{1:k})
$$
where in an abuse of notation, we use $\pi_l$ to denote the density and measure, and $\nu(dx_{1:k})=\bigotimes_{i=1}^k\nu_i(dx_i)$, $\nu_i\in\mathscr{M}(\mathsf{X}_i)$ is the dominating measure, typically Lebesgue or counting.
Then for $0\leq l \leq L$ we set
\begin{equation}\label{eq:dgs_ver}
K_{l}(x_{1:k},dx_{1:k}') = \Big(\prod_{i=1}^k \pi_{l}(x_i'|x_{1:i-1}',x_{i+1:k})\Big)\nu(dx_{1:k}')
\end{equation}
where for $i\in\{1,\dots,k\}$
$$
\pi_{l}(x_i'|x_{1:i-1}',x_{i+1:k}) = \frac{\pi_{l}(x_{1:i}',x_{i+1:k})}{\int_{\mathsf{X}_i}\pi_{l}(x_{1:i}',x_{i+1:k})\nu_i(dx_i')}.
$$
We further suppose that $\pi_{l}(x_i|x_{1:i-1}',x_{i+1:k})$ can be sampled by $U_i\sim\mu_i$, $\mu_i\in\mathscr{P}(\mathsf{U}_i)$
($\mathsf{U}=\bigotimes_{i=1}^k \mathsf{U}_i$)
and using the transformation
\begin{equation}\label{eq:dgs_map_ver}
T_{l,i}([x_{1:i-1}',x_{i+1:k}],U_i)
\end{equation}
for each $l,i$.

\begin{hypB}\label{ass:verify}
\begin{enumerate}
\item{There exists $0< \underline{C}<\overline{C}<+\infty$ such that for any $l\geq 0$, $x_{1:k}\in\mathsf{X}$
$$
\underline{C} \leq \pi_l(x_{1:k}) \leq \overline{C}.
$$
}
\item{There exists $C<+\infty$ such that for any $l\geq 0$, $(x_{1:k},y_{1:k})\in\mathsf{X}\times\mathsf{X}$
$$
|\pi_l(x_{1:k})-\pi_l(y_{1:k})| \leq C |x_{1:k}-y_{1:k}|.
$$
}
\item{There exists $C<+\infty$ such that for any $l\geq 1$, $x_{1:k}\in\mathsf{X}$
$$
|\pi_l(x_{1:k})-\pi_{l-1}(x_{1:k})| \leq C h_l^{\beta}.
$$
}
\item{There exists $\tau<1/k$ such that for any $l\geq 0$, $(x_{1:k},y_{1:k})\in\mathsf{X}\times\mathsf{X}$, $1\leq i \leq k$, $u_i\in\mathsf{U}_i$
$$
|T_{l,i}([x_{1:k}],u_i)-T_{l,i}([y_{1:k}],u_i)|^2 \leq \tau|x_{1:k}-y_{1:k}|^2.
$$
}
\item{There exists $C<+\infty$ such that for any $l\geq 1$, $x_{1:k}\in\mathsf{X}$$, 1\leq i \leq k$, $u_i\in\mathsf{U}_i$
$$
|T_{l,i}([x_{1:k}],u_i)-T_{l-1,i}([x_{1:k}],u_i)|^2 \leq C h_l^{\beta}.
$$
}
\end{enumerate}
\end{hypB}

\begin{prop}\label{prop:verify}
Assume (B\ref{ass:verify}). Then the target $\pi_l$ and deterministic Gibbs sampler corresponding to \eqref{eq:dgs_ver} which is sampled via  \eqref{eq:dgs_map_ver}, satisfies 
(A\ref{ass:1}-\ref{ass:5}).
\end{prop}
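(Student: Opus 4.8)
The plan is to verify the five conditions (A\ref{ass:1})--(A\ref{ass:5}) one at a time, exploiting that the full-sweep kernel $K_l$ may be viewed in two ways: as the product of one-dimensional conditionals, with density $k_l(x,x')=\prod_{i=1}^k \pi_l(x_i'\mid x_{1:i-1}',x_{i+1:k})$ (useful for ergodicity and consecutive-level comparisons), and as the composition of the $k$ coordinate maps, $\xi_l(\cdot,u)=\Phi_{l,k}(\cdot,u_k)\circ\cdots\circ\Phi_{l,1}(\cdot,u_1)$, where $\Phi_{l,i}$ overwrites coordinate $i$ by $T_{l,i}$ and leaves the others fixed (useful for the contraction and coupling estimates). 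I would begin with (A\ref{ass:1}) by establishing a uniform Doeblin minorization. The bounds $\underline{C}\le\pi_l\le\overline{C}$ from (B\ref{ass:verify}) bound each conditional density above and below by level-independent positive constants, the normalising integrals being finite since $\mathsf{X}$, hence each $\mathsf{X}_i$, is compact and $\nu_i(\mathsf{X}_i)<\infty$. Thus $k_l(x,x')\ge\epsilon$ for an $\epsilon>0$ independent of $l$ and $x$, so $K_l(x,\cdot)\ge\epsilon\,\nu(\cdot)$, and the classical Doeblin argument yields $\|K_l^n(x,\cdot)-\pi_l(\cdot)\|_{\textrm{tv}}\le C\rho^n$ with $\rho=1-\epsilon\nu(\mathsf{X})$ uniform in $l$ and $x$.

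The heart of the argument is (A\ref{ass:4}), which I would prove through the composition structure. Running two copies from $x$ and $y$ with the same $u$, set $D_i=\mathbb{E}|x^{(i)}-y^{(i)}|^2$ and track it across a sweep. Because $T_{l,i}$ depends only on the conditioning coordinates, the contraction bound in (B\ref{ass:verify}) can be applied with the conditioning distance, i.e.\ $D_{i-1}$ with the (still-original) coordinate-$i$ contribution $a_i$ removed; this gives a recursion of the form $D_i\le(1+\tau)(D_{i-1}-a_i)$, where $a_i$ is the original squared distance in coordinate $i$. Unrolling this telescoping recursion produces $D_k\le\sum_i a_i\big[(1+\tau)^k-(1+\tau)^{k-i+1}\big]$, and one then invokes $\tau<1/k$ to keep the largest compound factor below one, yielding $\int_{\mathsf{U}}|\xi_l(x,u)-\xi_l(y,u)|^2\mu(du)\le\tau'|x-y|^2$ with $\tau'<1$. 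The careful bookkeeping of which coordinates have already been refreshed, together with squeezing the compound contraction constant strictly below one, is the step I expect to be the main obstacle.

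With (A\ref{ass:4}) established, (A\ref{ass:3}) is immediate from the iterated-map representation $K_l(\varphi)(x)=\int_{\mathsf{U}}\varphi(\xi_l(x,u))\mu(du)$: for $\varphi\in\mathscr{B}_b(\mathsf{X})\cap\textrm{Lip}(\mathsf{X})$ one has $|K_l(\varphi)(x)-K_l(\varphi)(y)|\le\|\varphi\|_{\textrm{Lip}}\int_{\mathsf{U}}|\xi_l(x,u)-\xi_l(y,u)|\mu(du)$, and by Jensen together with (A\ref{ass:4}) the integral is at most $\sqrt{\tau'}\,|x-y|\le|x-y|$. The first part of (A\ref{ass:2}) is equally direct: since $\pi_l,\pi_{l-1}$ have $\nu$-densities, $|[\pi_l-\pi_{l-1}](\varphi)|\le\|\varphi\|\int|\pi_l-\pi_{l-1}|\,d\nu\le C\|\varphi\|h_l^{\beta}\nu(\mathsf{X})$ by the consecutive-level density bound in (B\ref{ass:verify}) and compactness.

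The remaining two facts require more care. The second part of (A\ref{ass:2}) cannot be routed through the maps, since the Lipschitz estimate above would cost a factor $\|\varphi\|_{\textrm{Lip}}$ and only deliver $h_l^{\beta/2}$; instead I would compare the kernel densities directly, bounding $\sup_x\int_{\mathsf{X}}|k_l(x,x')-k_{l-1}(x,x')|\nu(dx')$. Using the uniform upper and lower bounds and the density-closeness bound of (B\ref{ass:verify}), each one-dimensional conditional is a ratio of $O(h_l^{\beta})$-close, uniformly-positive quantities and hence differs by $O(h_l^{\beta})$; telescoping the product of $k$ uniformly bounded conditionals keeps the total difference $O(h_l^{\beta})$, and integrating against $\varphi\in\mathscr{B}_b(\mathsf{X})$ gives the stated $\|\varphi\|h_l^{\beta}$ bound. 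Finally, for (A\ref{ass:5}) I would couple $\xi_l$ and $\xi_{l-1}$ from the common point $x$ with common $u$, splitting each step as $T_{l,i}(\mathrm{cond}_l)-T_{l-1,i}(\mathrm{cond}_{l-1})=[T_{l,i}(\mathrm{cond}_l)-T_{l,i}(\mathrm{cond}_{l-1})]+[T_{l,i}(\mathrm{cond}_{l-1})-T_{l-1,i}(\mathrm{cond}_{l-1})]$, applying the contraction of (B\ref{ass:verify}) to the first bracket and the map-closeness bound of (B\ref{ass:verify}) to the second; the resulting recursion accumulates over the $k$ steps to a constant multiple of $h_l^{\beta}$, which is (A\ref{ass:5}). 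The delicate point is again the same sweep bookkeeping encountered in (A\ref{ass:4}).
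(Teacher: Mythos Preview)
Your plan is correct and tracks the paper's proof closely: your treatments of (A\ref{ass:1}), (A\ref{ass:2}) and (A\ref{ass:5}) are exactly the paper's (Doeblin minorization from (B\ref{ass:verify}).1; density-level telescoping with (B\ref{ass:verify}).3; and the split-plus-induction argument combining (B\ref{ass:verify}).4 and (B\ref{ass:verify}).5). The one genuine departure is (A\ref{ass:3}). The paper proves it \emph{directly} by telescoping the product of one-dimensional conditional densities and invoking (B\ref{ass:verify}).1--2, never touching the maps $T_{l,i}$; you instead derive it as a corollary of (A\ref{ass:4}) via the iterated-map representation $K_l(\varphi)(x)=\int\varphi(\xi_l(x,u))\,\mu(du)$ and Jensen. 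Your route is shorter and more elegant, but it forces you to prove (A\ref{ass:4}) before (A\ref{ass:3}); the paper's density argument is self-contained and reuses the same telescoping template that it then recycles for (A\ref{ass:2}).2. For (A\ref{ass:4}) itself, both you and the paper pursue the same composition-and-recursion idea with slightly different bookkeeping: you track the full-state distance $D_i$ and obtain coefficients $(1+\tau)^k-(1+\tau)^{k-i+1}$, whereas the paper accumulates the output-coordinate distances and arrives at the cruder bound $k\sum_{i=1}^k\tau^i$. You are right to flag ``squeezing the compound contraction constant strictly below one'' as the sticking point---in fact neither the paper's $k\sum_{i=1}^k\tau^i$ nor your maximal coefficient $(1+\tau)^k-(1+\tau)$ is below $1$ for $k\ge 3$ as $\tau\uparrow 1/k$, so the paper's ``it easily follows'' is no more justified than your hedged version.
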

\noindent The proof is given in Appendix \ref{app:verify}.

\section{Numerical Experiment}\label{sec:numerics}

\subsection{Model and MCMC}

We consider a slightly modified model in \cite{sergios1}. Data $y_1,\dots$, $y_i\in\mathbb{R}$ are such
that for $u_i\in\mathbb{R}$, $\lambda\in\mathbb{R}^+$
$$
Y_i|u_i \stackrel{ind}{\sim} \mathscr{N}(u_i,\lambda^{-1}).
$$
$\lambda\in\mathbb{R}^+$ is assumed to be known.
The prior model on the $u_i$ are associated to an unknown hyper-parameter $\delta\in\mathbb{R}^+$ and we assume a joint 
prior for any $k\geq 1$
$$
p(u_{1:k},\delta) = \exp\Big\{-\frac{\delta}{2}\sum_{j=1}^k j^{-3}u_j^2\Big\}\delta^{\alpha_0-1}\exp\{-\delta\kappa_0\} \, ,
$$
which is improper if $k\geq 2\alpha_0$.
It is easily checked that, for any $k\geq 1$, the posterior on $(u_{1:k},\delta)$ is proper provided 
$(\alpha_0,\kappa_0,\lambda)\in(\mathbb{R}^+)^3$.

Set $h_l=K_l^{-1}$, where $K_l=M_0 2^l$; 
for some $M_0\in\mathbb{N}$, we consider a sequence of posteriors for $l\in\{0,1,\dots,L\}$ on $u_{1:K_l},\delta$ (the data are simulated from the true model). One can construct a Gibbs sampler as in \cite{sergios1} 
which has full conditional densities:
\begin{eqnarray*}
u_{1:K_l} | y_{1:K_l}, \delta & \sim & \mathscr{N}_{K_l}(m_l(\delta),C_l(\delta)) \\
\delta | y_{1:K_l}, u_{1:K_l} & \sim & \mathscr{G}(\alpha_0,\kappa_l) 
\end{eqnarray*}
where  $\mathscr{N}_{h}(m,C)$ is a $h-$dimensional Gaussian distribution of mean vector $m$ and covariance matrix $C$, 
$m_l(\delta)= \lambda^{-1}\Big(\frac{y_1}{\delta+\lambda^{-1}},\dots,\frac{ y_{K_l }}
{\delta(h_{l})^{3}+\lambda^{-1}}\Big)'$,
$C_l(\delta)=\textrm{diag}((\delta+\lambda^{-1})^{-1},\dots,(\delta(h_{l})^{3}+\lambda^{-1})^{-1})$, $\mathscr{G}(\alpha,\kappa)$ is a Gamma
distribution of mean $\alpha/\kappa$ ($(\alpha,\kappa)\in(\mathbb{R}^+)^2$) and $\kappa_l = \kappa_0 + \frac{1}{2}\sum_{i=1}^{K_l}i^{-3} u_i^2$.

The Gibbs sampler is easily coupled when considering levels $l$ and $l-1$, $l\geq 1$.
Denote 
by $L_l(\delta)$ the lower triangular Cholesky factor
of $C_l(\delta)$.
 When considering the update at level $l$, $\check{u}(l)_{1:K_l} | y_{1:K_l}, \delta$,
one samples a $V_{1:K_l}\sim\mathscr{N}_{K_l}(0,I_{K_l})$ ($I_{K_l}$ is the $K_l\times K_l$ identity matrix) and sets
$$
\overline{U}(l)_{1:K_l} = m_l(\delta) + 
L_l(\delta) V_{1:K_l}
$$
and 
$$
\underline{U}(l)_{1:K_{l-1}} = m_{l-1}(\delta) +L_{l-1}(\delta)
V_{1:K_{l-1}} \, .
$$
Note that the value of $\delta$ will typically be different between levels $l$ and $l-1$. 
For the update on $\delta$, one can sample $W\sim\mathscr{G}(\alpha_0,1)$ and
set 
$\overline{\delta}(l) = \kappa_l W$ and 
$\underline{\delta}(l)=\kappa_{l-1} W$.

\begin{rem}
The paper \cite{sergios1} focuses on improving the algorithm described here by a reformulation
of the problem which requires an accept/reject step. We do not consider the improved sampling algorithm, 
as the objective here is to illustrate that the method developed in this paper works for rejection-free MCMC. 
\end{rem}

\subsection{Simulation Results}

In our experiments $\alpha_0=1$, $\beta_0=0.1$, $\lambda=1000$ and $M_0=8$. 
We consider the posterior expectation of the function $\frac{1}{M_0}\sum_{i=1}^{M_0} u_i$. 
This expectation appears to converge to a limit as $l$ grows. 
We first run an experiment to determine the $\beta$ (as in Theorem \ref{theo:main_thm}) and order of bias of
the approximation (expected to be $\mathcal{O}(h_l^{\rho})$ for some $\rho\geq \beta/2$). The true value is computed by using the algorithm at one plus the most precise level (i.e.~$L+1$).
The results are presented in Figure \ref{fig:fig1}, which suggest that $\rho=2$ and $\beta=4$. The cost of the algorithm at level $l$ is $\mathcal{O}(h_l^{-1})$. Using the standard ML theory (e.g.~\cite{giles}), for $\epsilon>0$ given we set $N_l=\mathcal{O}(\epsilon^{-2} h_l^{2.5})$ and $L=\mathcal{O}(|\log(\epsilon)|)$.
In Figure \ref{fig:fig2} we can see the cost against MSE for the MLMCMC procedure, versus the MCMC at level $L$ with $N=\mathcal{O}(\epsilon^{-2})$ samples. The improvement
is clear.

\begin{figure}\centering
\centering \subfigure[Variance]{{\includegraphics[width=0.49\textwidth,height=8cm]{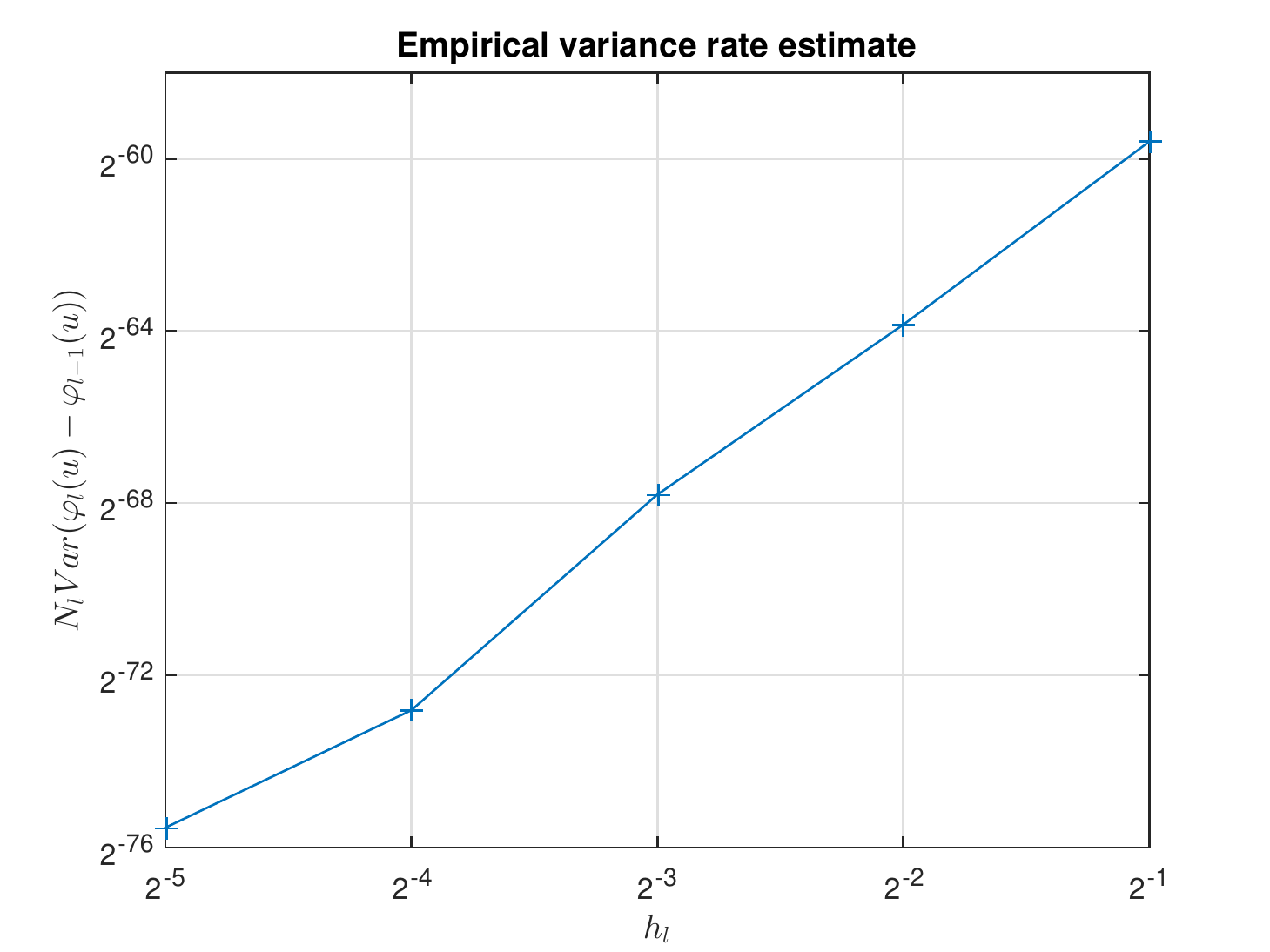}}}
\subfigure[Bias]{{\includegraphics[width=0.49\textwidth,height=8cm]{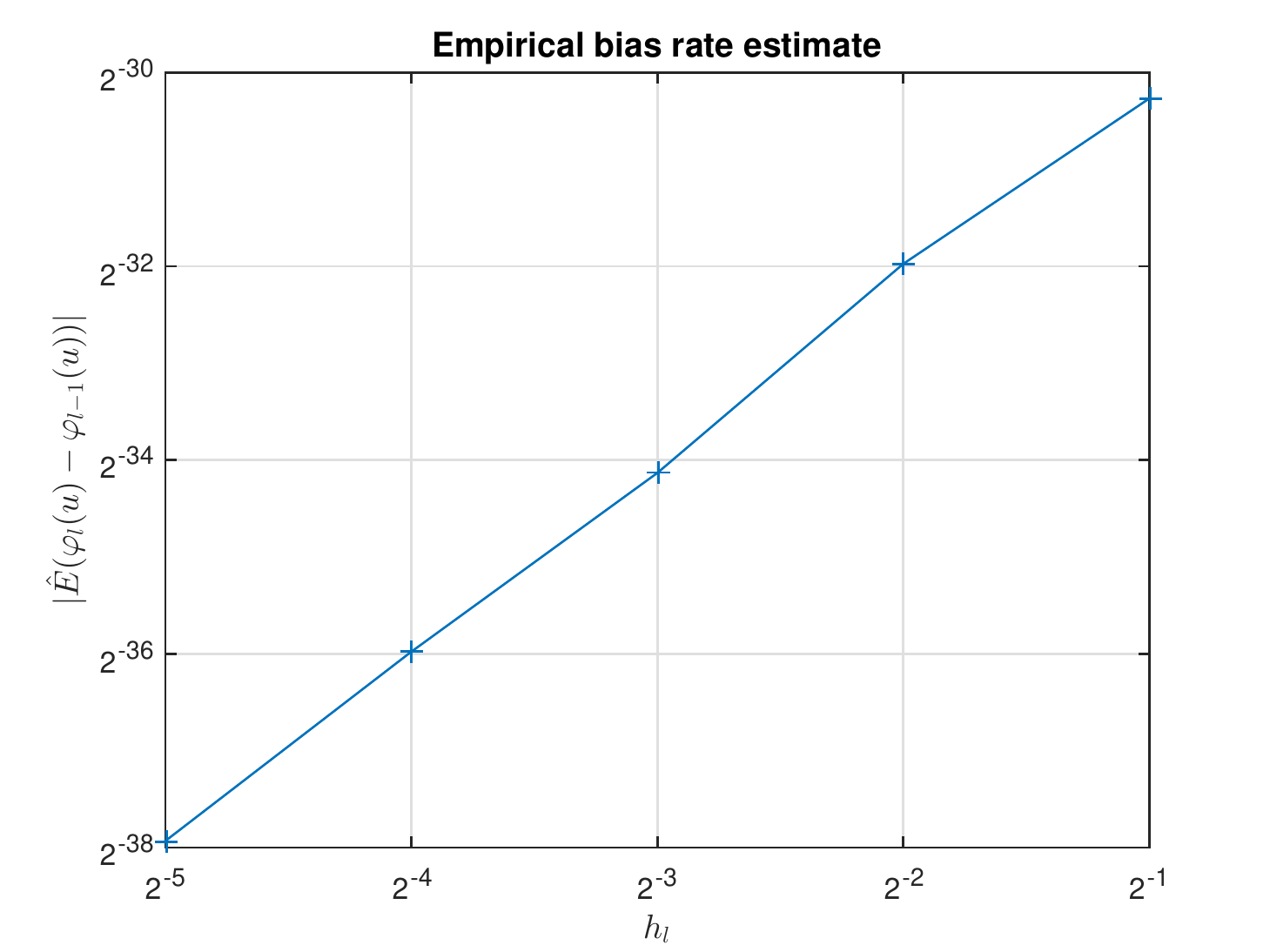}}}
\caption{Estimates of the Variance and Bias Rates.}
\label{fig:fig1}
\end{figure}

\begin{figure}\centering
  \includegraphics[height=8cm]{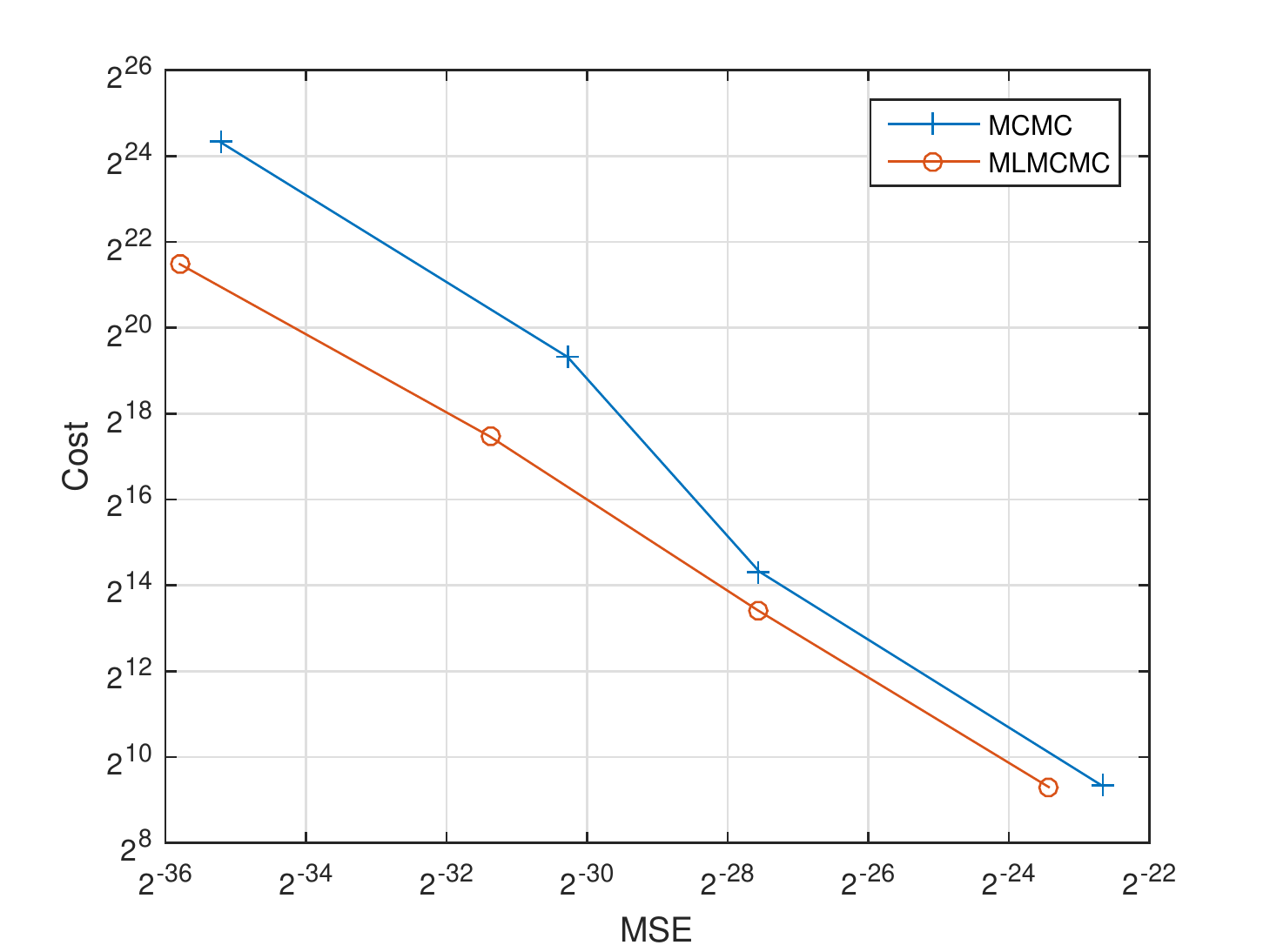}
  \caption{Cost against MSE.}
  \label{fig:fig2}
\end{figure}

\section{Summary}\label{sec:summary}

In this article we have considered a new Markov chain simulation approach to implementing MLMC. The main utility to using this
idea, is that one needs only a standard MCMC procedure for sampling $\pi_0,\dots,\pi_L$. Then the additional implementation is straightforward
and can sometimes drastically reduce the cost to achieve a given level of MSE. As we have remarked, one does require some properties of the Markov
kernel simulated, in order for the strategy that we have suggested to work well in practice. In general, we believe it is of most use if the Markov chain method
is rejection free. In contexts where this cannot be achieved, we believe a more intricate coupling is required 
(see e.g.~\cite{bou,heng}).

\subsubsection*{Acknowledgements}
AJ was supported by an AcRF tier 2 grant: R-155-000-161-112. AJ is affiliated with the Risk Management Institute, the Center for Quantitative Finance 
and the OR \& Analytics cluster at NUS. AJ was supported by a KAUST CRG4 grant ref: 2584.
KJHL was supported by the University of Manchester School of Mathematics. 

\appendix

\section{Variance Results}\label{app:var_res}

In order to prove Proposition \ref{prop:marg} and Theorem \ref{theo:main_thm} we will make use of the (additive) Poisson equation (e.g.~\cite{glynn}).
That is, let $\varphi\in\mathscr{B}_b(\mathsf{X})$ then given our Markov kernels $(K_l)_{l\geq 1}$ we say that $\widehat{\varphi}_l$ solves
the Poisson equation if
$$
\varphi(x) - \pi_l(\varphi) = \widehat{\varphi}_l(x) - K_l(\widehat{\varphi}_l)(x).
$$
If (A\ref{ass:1}) holds, then a solution is
$$
\widehat{\varphi}_l(x) = \sum_{n\geq 0} [K_l^n(\varphi)(x)-\pi_l(\varphi)]
$$
which is the one that is considered here.

Our proof consists of several lemmata, followed by the proofs of Proposition \ref{prop:marg} and Theorem \ref{theo:main_thm}. Throughout $C$ is a finite constant which may change from line-to-line,
but will not depend on any important parameters (e.g.~$l$, $n$) unless stated.

\begin{lem}\label{lem:pois_lip}
Assume (A\ref{ass:1}-\ref{ass:3}). Then there exists a $C<\infty$, such that for any $l\geq 0$, $\varphi\in\mathscr{B}_b(\mathsf{X})\cap\textrm{\emph{Lip}}(\mathsf{X})$
we have:
$$
|\widehat{\varphi}_l(x)-\widehat{\varphi}_l(y)| \leq C(\|\varphi\|\vee \|\varphi\|_{\textrm{\emph{Lip}}}) |x-y|.
$$
\end{lem}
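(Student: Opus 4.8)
The plan is to exploit the series form of the Poisson solution together with the synchronous (common-noise) coupling built into the iterated-map representation of $K_l$. Under (A\ref{ass:1}) one has $\sup_{x}|K_l^n(\varphi)(x)-\pi_l(\varphi)|\le \textrm{Osc}(\varphi)\,C\rho^n\le 2\|\varphi\|C\rho^n$, so the series $\widehat\varphi_l(x)=\sum_{n\ge 0}[K_l^n(\varphi)(x)-\pi_l(\varphi)]$ converges absolutely and is the solution under consideration. Since the constants $\pi_l(\varphi)$ cancel in a difference,
\[
\widehat\varphi_l(x)-\widehat\varphi_l(y)=\sum_{n\ge 0}\big[K_l^n(\varphi)(x)-K_l^n(\varphi)(y)\big],
\]
and the whole problem reduces to bounding the terms $a_n:=|K_l^n(\varphi)(x)-K_l^n(\varphi)(y)|$ by a sequence that is simultaneously proportional to $|x-y|$ and summable in $n$.

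The central estimate I would prove is $a_n\le \|\varphi\|_{\textrm{Lip}}\,\tau^{n/2}\,|x-y|$. To obtain it, run two copies of the chain started at $x$ and $y$ driven by the \emph{same} innovations $(U_m)_{m\ge 1}$, i.e. $X_m^x=\xi_l(X_{m-1}^x,U_m)$ and $X_m^y=\xi_l(X_{m-1}^y,U_m)$. Conditioning on the current pair and invoking (A\ref{ass:4}) gives $\mathbb{E}[|X_m^x-X_m^y|^2 \mid X_{m-1}^x,X_{m-1}^y]\le\tau|X_{m-1}^x-X_{m-1}^y|^2$, whence $\mathbb{E}|X_n^x-X_n^y|^2\le\tau^n|x-y|^2$ by iterated expectations. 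As $(X_n^x,X_n^y)$ is a coupling of $K_l^n(x,\cdot)$ and $K_l^n(y,\cdot)$, we have $K_l^n(\varphi)(x)-K_l^n(\varphi)(y)=\mathbb{E}[\varphi(X_n^x)-\varphi(X_n^y)]$, and Cauchy--Schwarz with the Lipschitz bound on $\varphi$ produce the estimate. Summing the geometric series then yields
\[
|\widehat\varphi_l(x)-\widehat\varphi_l(y)|\le \|\varphi\|_{\textrm{Lip}}\,|x-y|\sum_{n\ge 0}\tau^{n/2}=\frac{1}{1-\sqrt{\tau}}\,\|\varphi\|_{\textrm{Lip}}\,|x-y|,
\]
which is the claim with $C=(1-\sqrt{\tau})^{-1}$, using $\|\varphi\|_{\textrm{Lip}}\le\|\varphi\|\vee\|\varphi\|_{\textrm{Lip}}$.

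The \textbf{main obstacle} is precisely securing a per-term bound that is both linear in $|x-y|$ and summable; this is what separates a genuine Lipschitz modulus from mere H\"older control. The naive route is to iterate the one-step regularity (A\ref{ass:3}) to estimate $\|K_l^n(\varphi)\|_{\textrm{Lip}}$, but each application can inflate the Lipschitz seminorm by the constant $C$ of (A\ref{ass:3}), which is only assumed finite (not below one); interpolating this potentially $C^n$ growth against the ergodic decay $\rho^n$ of (A\ref{ass:1}) delivers only a H\"older exponent $<1$, not the stated Lipschitz bound. The coupling circumvents this because the contraction forces the Lipschitz modulus of the iterates to decay geometrically, i.e. it furnishes the one-step seminorm contraction $\|K_l(\psi)\|_{\textrm{Lip}}\le\sqrt{\tau}\,\|\psi\|_{\textrm{Lip}}$. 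Equivalently one may avoid the series and bootstrap through the resolvent identity $\widehat\varphi_l=\varphi-\pi_l(\varphi)+K_l(\widehat\varphi_l)$: applying this contraction to $K_l(\widehat\varphi_l)$ gives $\|\widehat\varphi_l\|_{\textrm{Lip}}\le\|\varphi\|_{\textrm{Lip}}+\sqrt{\tau}\,\|\widehat\varphi_l\|_{\textrm{Lip}}$, and hence the same constant. I would therefore organise the proof around the contraction, flagging that (A\ref{ass:1}) alone secures existence of $\widehat\varphi_l$ while the contractive coupling is what upgrades continuity to the Lipschitz estimate (the boundary case $l=0$ being handled identically whenever the level-$0$ map is contractive, as it is for the Gibbs examples via (B\ref{ass:verify})).
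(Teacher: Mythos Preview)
Your argument is mathematically sound but departs from the paper in both method and hypotheses: you rely on the synchronous-coupling contraction (A\ref{ass:4}), whereas the lemma as stated assumes only (A\ref{ass:1}) and (A\ref{ass:3}). The paper's own proof is a two-line appeal to the Poisson-equation identity
\[
\widehat\varphi_l(x)-\widehat\varphi_l(y)=\varphi(x)-\varphi(y)+K_l(\widehat\varphi_l)(x)-K_l(\widehat\varphi_l)(y)
\]
(the printed version writes $K_l(\varphi)$, apparently a slip) followed by the triangle inequality and (A\ref{ass:3}). As you correctly diagnose in your discussion of the ``naive route'', that step is delicate: applying (A\ref{ass:3}) to the function $\widehat\varphi_l$ produces $|K_l(\widehat\varphi_l)(x)-K_l(\widehat\varphi_l)(y)|\le C(\|\widehat\varphi_l\|\vee\|\widehat\varphi_l\|_{\textrm{Lip}})|x-y|$, which is circular in $\|\widehat\varphi_l\|_{\textrm{Lip}}$ unless the constant $C$ in (A\ref{ass:3}) is strictly below $1$---something the paper does not assume.

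Your alternative, summing the series termwise and using $\mathbb{E}|X_n^x-X_n^y|^2\le\tau^n|x-y|^2$ from (A\ref{ass:4}), is clean and yields the explicit constant $(1-\sqrt\tau)^{-1}$; in effect you do not use (A\ref{ass:3}) at all, replacing the kernel-level Lipschitz bound by the stronger map-level contraction, which is exactly what closes the bootstrap $\|\widehat\varphi_l\|_{\textrm{Lip}}\le\|\varphi\|_{\textrm{Lip}}+\sqrt\tau\,\|\widehat\varphi_l\|_{\textrm{Lip}}$. Since the lemma is only ever invoked inside results (Lemma \ref{lem:final_tech_lemma}, Theorem \ref{theo:main_thm}) where all of (A\ref{ass:1})--(A\ref{ass:5}) are in force, your importing (A\ref{ass:4}) does no harm downstream; but you should state explicitly that you are strengthening the hypotheses of the lemma itself, and note, as you do, that (A\ref{ass:4}) is formulated for $l\ge 1$ so the $l=0$ case needs the same contractivity to be available for $\xi_0$.
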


\begin{proof}
We have
$$
\widehat{\varphi}_l(x)-\widehat{\varphi}_l(y) = \varphi(x) - \varphi(y) + K_l(\varphi)(x)  - K_l(\varphi)(y).
$$
The proof follows then by the triangular inequality and (A\ref{ass:3}).
\end{proof}

\begin{lem}\label{lem:poisson_eq_cont}
Assume (A\ref{ass:1},\ref{ass:2}). Then there exists a $C<\infty$, such that for any $l\geq 1$, $\varphi\in\mathscr{B}_b(\mathsf{X})$ we have:
$$
\sup_{x\in\mathsf{X}} |\widehat{\varphi}_l(x)-\widehat{\varphi}_{l-1}(x)| \leq C\|\varphi\| h_l^{\beta}.
$$
\end{lem}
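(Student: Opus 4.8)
The plan is to bypass the term-by-term telescoping of the defining series and instead derive a single Poisson-type equation for the difference $g := \widehat\varphi_l - \widehat\varphi_{l-1}$, driven by a source whose supremum norm is $O(h_l^\beta)$. First I would record the elementary consequence of (A\ref{ass:1}) that each Poisson solution is bounded: since $\widehat\varphi_{l-1}(x) = \sum_{n\ge0}[K_{l-1}^n(\varphi)(x) - \pi_{l-1}(\varphi)]$ and $|K_{l-1}^n(\varphi)(x)-\pi_{l-1}(\varphi)| \le \mathrm{Osc}(\varphi)\,C\rho^n \le 2C\|\varphi\|\rho^n$, summing the geometric series gives $\|\widehat\varphi_{l-1}\| \le \tfrac{2C}{1-\rho}\|\varphi\|$, and likewise for $\widehat\varphi_l$. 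This boundedness is what will later license applying (A\ref{ass:2}) to the function $\widehat\varphi_{l-1}$ itself.

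Next I would subtract the two Poisson equations $\widehat\varphi_l - K_l\widehat\varphi_l = \varphi - \pi_l(\varphi)$ and $\widehat\varphi_{l-1} - K_{l-1}\widehat\varphi_{l-1} = \varphi - \pi_{l-1}(\varphi)$. Writing $K_l\widehat\varphi_l - K_{l-1}\widehat\varphi_{l-1} = K_l g + (K_l - K_{l-1})\widehat\varphi_{l-1}$ rearranges this into $g - K_l g = R$, where $R := (K_l - K_{l-1})\widehat\varphi_{l-1} - [\pi_l - \pi_{l-1}](\varphi)$. The point is that $R$ is small: applying (A\ref{ass:2})~2. to the bounded function $\widehat\varphi_{l-1}$ gives $\sup_x|(K_l-K_{l-1})\widehat\varphi_{l-1}(x)| \le C\|\widehat\varphi_{l-1}\|h_l^\beta$, while (A\ref{ass:2})~1. gives $|[\pi_l-\pi_{l-1}](\varphi)| \le C\|\varphi\|h_l^\beta$; combined with the bound on $\|\widehat\varphi_{l-1}\|$ this yields $\|R\| \le C\|\varphi\|h_l^\beta$.

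It then remains to invert $g - K_l g = R$ in supremum norm. Iterating gives $g = \sum_{m=0}^{n-1}K_l^m R + K_l^n g$ for every $n$; applying $\pi_l$ and using $\pi_l K_l = \pi_l$ shows $\pi_l(R)=0$, so $|K_l^m R(x)| = |K_l^m R(x) - \pi_l(R)| \le 2C\|R\|\rho^m$ and the partial sums converge, while (A\ref{ass:1}) gives $K_l^n g(x) \to \pi_l(g)$ as $n\to\infty$. Hence $g(x) = \sum_{m\ge0}K_l^m R(x) + \pi_l(g)$, and the series is bounded by $\tfrac{2C}{1-\rho}\|R\| = O(\|\varphi\|h_l^\beta)$. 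For the additive constant, since $\pi_{l-1}(\widehat\varphi_{l-1})=0$ one has $\pi_l(g) = -\pi_l(\widehat\varphi_{l-1}) = -[\pi_l-\pi_{l-1}](\widehat\varphi_{l-1})$, which (A\ref{ass:2})~1. bounds by $C\|\widehat\varphi_{l-1}\|h_l^\beta = O(\|\varphi\|h_l^\beta)$; adding the two contributions finishes the proof.

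The step I expect to be the main obstacle is the correct bookkeeping of the constant $\pi_l(g)$: because $g$ is the difference of two solutions centred with respect to \emph{different} invariant measures, $g$ is not $\pi_l$-centred, so one cannot simply write $g = \sum_m K_l^m R$. The device is to keep the residual $K_l^n g$ in the finite iteration, send $n\to\infty$, identify the limit as $\pi_l(g)$, and then estimate $\pi_l(g)$ through the cross term $[\pi_l-\pi_{l-1}](\widehat\varphi_{l-1})$. A naive telescoping $K_l^n - K_{l-1}^n = \sum_{m}K_l^m (K_l-K_{l-1})K_{l-1}^{n-1-m}$ applied directly to the defining series would instead produce a factor of $n$ inside the sum over $n$ and force a lossy $\log(1/h_l)$ factor, so routing everything through the single equation $g - K_l g = R$ is what keeps the exponent exactly $\beta$.
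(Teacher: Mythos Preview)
Your proof is correct and takes a genuinely different route from the paper's. The paper invokes an explicit telescoping identity (\cite[Proposition C.2]{andrieu}) for $\widehat\varphi_l-\widehat\varphi_{l-1}$ as a double series
\[
\sum_{n\ge1}\Big[\sum_{i=0}^{n-1}[K_l^i-\pi_l]\big\{[K_l-K_{l-1}]\big([K_{l-1}^{n-i-1}-\pi_{l-1}](\varphi)\big)\big\}(x)-[\pi_l-\pi_{l-1}]\{[K_{l-1}^n-\pi_{l-1}](\varphi)\}\Big],
\]
and then bounds each summand by $Cn\rho^{n-1}\|\varphi\|h_l^\beta$ and $C\rho^{n-1}\|\varphi\|h_l^\beta$ respectively, summing the resulting series. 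Your approach instead recognises $g=\widehat\varphi_l-\widehat\varphi_{l-1}$ as the solution of a new Poisson-type equation $g-K_lg=R$ with source $R=(K_l-K_{l-1})\widehat\varphi_{l-1}-[\pi_l-\pi_{l-1}](\varphi)$, bounds $\|R\|$ directly via (A\ref{ass:2}) applied to $\widehat\varphi_{l-1}$, and then inverts using uniform ergodicity plus the careful bookkeeping of the constant $\pi_l(g)=-[\pi_l-\pi_{l-1}](\widehat\varphi_{l-1})$. This is cleaner and more structural: it avoids the external reference and the double summation, and it isolates exactly which two applications of (A\ref{ass:2}) are needed (once to $\widehat\varphi_{l-1}$ for the source, once for the constant). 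The paper's approach, by contrast, works entirely at the level of the original function $\varphi$ and never needs to apply (A\ref{ass:2}) to the Poisson solution itself.

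One small correction to your closing commentary: the centred telescoping the paper actually uses does \emph{not} incur a $\log(1/h_l)$ loss. The factor of $n$ you anticipate appears, but it sits against $\rho^{n-1}$ (from the two-sided centering $[K_l^i-\pi_l]$ and $[K_{l-1}^{n-i-1}-\pi_{l-1}]$), and $\sum_n n\rho^{n-1}<\infty$, so the rate is still exactly $h_l^\beta$. Your route is tidier, but both deliver the sharp exponent.
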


\begin{proof}
By \cite[Proposition C.2]{andrieu} we have
$$
\widehat{\varphi}_l(x)-\widehat{\varphi}_{l-1}(x) = 
$$
$$
\sum_{n\geq 1}\Big[
\sum_{i=0}^{n-1} [K_l^i-\pi_l]\big\{[K_l-K_{l-1}]\big([K_{l-1}^{n-i-1}-\pi_{l-1}](\varphi)\big)\big\}(x) -
[\pi_l-\pi_{l-1}]\{[K_{l-1}^n-\pi_{l-1}](\varphi)\}
\Big].
$$
We consider the two terms in the difference of the summand individually.

\noindent\textbf{Term:} $\sum_{i=0}^{n-1} [K_l^i-\pi_l]\big\{[K_l-K_{l-1}]\big([K_{l-1}^{n-i-1}-\pi_{l-1}](\varphi)\big)\big\}(x)$.
Now
$$
[K_l-K_{l-1}]\big([K_{l-1}^{n-i-1}-\pi_{l-1}](\varphi)\big)(x)
=
$$
$$
\textrm{Osc}(\varphi) \|K_{l-1}^{n-i-1}-\pi_{l-1}\|_{\textrm{tv}} \Big[[K_l-K_{l-1}]\Big(
\frac{[K_{l-1}^{n-i-1}-\pi_{l-1}](\frac{\varphi}{\textrm{Osc}(\varphi)})}{\|K_{l-1}^{n-i-1}-\pi_{l-1}\|_{\textrm{tv}}}
\Big)(x)\Big].
$$
By (A\ref{ass:1}) $\|K_{l-1}^{n-i-1}-\pi_{l-1}\|_{\textrm{tv}}\leq C\rho^{n-i-1}$ and 
$$
\frac{[K_{l-1}^{n-i-1}-\pi_{l-1}](\frac{\varphi}{\textrm{Osc}(\varphi)})}{\|K_{l-1}^{n-i-1}-\pi_{l-1}\|_{\textrm{tv}}}
$$
is bounded, so we have by (A\ref{ass:2})
\begin{equation}\label{eq:prf1}
\|[K_l-K_{l-1}]\big([K_{l-1}^{n-i-1}-\pi_{l-1}](\varphi)\big)\| \leq C \|\varphi\| \rho^{n-i-1} h_l^{\beta}
\end{equation}
for some $C$ that does not depend upon $n,i,l$. Now
$$
[K_l^i-\pi_l]\big\{[K_l-K_{l-1}]\big([K_{l-1}^{n-i-1}-\pi_{l-1}](\varphi)\big)\big\}(x) = 
$$
$$
\textrm{Osc}\Big([K_l-K_{l-1}]\big([K_{l-1}^{n-i-1}-\pi_{l-1}](\varphi)\big)\Big)
[K_l^i-\pi_l]\Big\{\frac{[K_l-K_{l-1}]\big([K_{l-1}^{n-i-1}-\pi_{l-1}](\varphi)\big)}{\textrm{Osc}\Big([K_l-K_{l-1}]\big([K_{l-1}^{n-i-1}-\pi_{l-1}](\varphi)\big)\Big)}\Big\} \leq
$$
$$
2\|[K_l-K_{l-1}]\big([K_{l-1}^{n-i-1}-\pi_{l-1}](\varphi)\big)\| \|K_l^i-\pi_l\|_{\textrm{tv}}
$$
so by (A\ref{ass:1}) and \eqref{eq:prf1}
$$
\|[K_l^i-\pi_l]\big\{[K_l-K_{l-1}]\big([K_{l-1}^{n-i-1}-\pi_{l-1}](\varphi)\big)\big\}\| \leq  C \|\varphi\| \rho^{n-1} h_l^{\beta}.
$$
Hence
\begin{equation}\label{eq:prf2}
\sum_{i=0}^{n-1} [K_l^i-\pi_l]\big\{[K_l-K_{l-1}]\big([K_{l-1}^{n-i-1}-\pi_{l-1}](\varphi)\big)\big\} \leq C n\|\varphi\|\rho^{n-1} h_l^{\beta}.
\end{equation}

\noindent\textbf{Term:} $[\pi_l-\pi_{l-1}]\{[K_{l-1}^n-\pi_{l-1}](\varphi)\}$.
Clearly
$$
[\pi_l-\pi_{l-1}]\{[K_{l-1}^n-\pi_{l-1}](\varphi)\} = 
\textrm{Osc}(\varphi)\Big\|[K_{l-1}^n-\pi_{l-1}]\Big(\frac{\varphi}{\textrm{Osc}(\varphi)}\Big)\Big\|
[\pi_l-\pi_{l-1}]\Big\{\frac{[K_{l-1}^n-\pi_{l-1}](\varphi)}{\|[K_{l-1}^n-\pi_{l-1}](\varphi)\|}\Big\}.
$$
Then, via (A\ref{ass:1}), as
$$
\Big\|[K_{l-1}^n-\pi_{l-1}]\Big(\frac{\varphi}{\textrm{Osc}(\varphi)}\Big)\Big\| \leq \sup_{x\in\mathsf{X}} \|K_{l-1}^n(s,\cdot)-\pi_{l-1}(\cdot)\|_{\textrm{tv}} \leq C \rho^{n}
$$
it follows by (A\ref{ass:2}) that 
\begin{equation}\label{eq:prf3}
|[\pi_l-\pi_{l-1}]\{[K_{l-1}^n-\pi_{l-1}](\varphi)\}| \leq C \|\varphi\|\rho^{n-1} h_l^{\beta}.
\end{equation}

The proof is completed by combining \eqref{eq:prf2} and \eqref{eq:prf3} and noting that the associated upper-bounds are summable in $n$.
\end{proof}

\begin{lem}\label{lem:sq_osc}
Assume (A\ref{ass:4}-\ref{ass:5}). Then there exists a $C<\infty$, such that for any $l\geq 1$, $n\geq 1$, $\varphi\in\mathscr{B}_b(\mathsf{X})\cap\textrm{\emph{Lip}}(\mathsf{X})$
we have:
$$
\mathbb{E}[(\varphi(\overline{X}_n(l))-\varphi(\underline{X}_n(l)))^2] \leq C(\|\varphi\|\vee \|\varphi\|_{\textrm{\emph{Lip}}})^2 h_l^{\beta}.
$$
\end{lem}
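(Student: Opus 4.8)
The plan is to reduce the claim to a contraction estimate on the squared distance $|\overline{X}_n(l)-\underline{X}_n(l)|^2$ and then invoke the Lipschitz property of $\varphi$. First I would note that for $\varphi\in\textrm{Lip}(\mathsf{X})$,
$$
(\varphi(\overline{X}_n(l))-\varphi(\underline{X}_n(l)))^2 \leq \|\varphi\|_{\textrm{Lip}}^2\,|\overline{X}_n(l)-\underline{X}_n(l)|^2 \leq (\|\varphi\|\vee\|\varphi\|_{\textrm{Lip}})^2\,|\overline{X}_n(l)-\underline{X}_n(l)|^2,
$$
so it suffices to prove $D_n:=\mathbb{E}[|\overline{X}_n(l)-\underline{X}_n(l)|^2]\leq C h_l^{\beta}$ uniformly in $n$, with $C$ depending only on $\tau$ and the constant in (A\ref{ass:5}).

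Next I would set up a recursion for $D_n$. Using the defining recursions $\overline{X}_n(l)=\xi_l(\overline{X}_{n-1}(l),U_n(l))$ and $\underline{X}_n(l)=\xi_{l-1}(\underline{X}_{n-1}(l),U_n(l))$, driven by the \emph{same} innovation $U_n(l)$, I would split
$$
\overline{X}_n(l)-\underline{X}_n(l) = \big[\xi_l(\overline{X}_{n-1}(l),U_n(l))-\xi_l(\underline{X}_{n-1}(l),U_n(l))\big] + \big[\xi_l(\underline{X}_{n-1}(l),U_n(l))-\xi_{l-1}(\underline{X}_{n-1}(l),U_n(l))\big].
$$
The crucial point is that $U_n(l)$ is independent of $\check{X}_{n-1}(l)$, so conditioning on $\check{X}_{n-1}(l)$ and integrating over $U_n(l)\sim\mu$ converts the two pointwise hypotheses into moment bounds: the first bracket contributes at most $\tau\,\mathbb{E}[|\overline{X}_{n-1}(l)-\underline{X}_{n-1}(l)|^2]=\tau D_{n-1}$ by (A\ref{ass:4}), and the second at most $C h_l^{\beta}$ by (A\ref{ass:5}).

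The heart of the argument is to combine these two contributions without generating an uncontrollable cross term. Rather than expanding the square directly, I would work in $L^2$ and apply Minkowski's inequality to obtain
$$
\sqrt{D_n} \leq \sqrt{\tau}\,\sqrt{D_{n-1}} + \sqrt{C}\,h_l^{\beta/2}.
$$
Writing $a_n=\sqrt{D_n}$ with $a_0=0$ (the two chains start at the common point $X_0(l)$), this linear recursion yields $a_n\leq \sqrt{C}\,h_l^{\beta/2}\sum_{k=0}^{n-1}\tau^{k/2}$, and since $\tau\in(0,1)$ the geometric sum is bounded by $(1-\sqrt{\tau})^{-1}$ uniformly in $n$; hence $D_n\leq C(1-\sqrt{\tau})^{-2}h_l^{\beta}$. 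Feeding this into the Lipschitz bound completes the proof. I expect the only delicate step to be this passage to the $L^2$ norm: bounding $D_n$ by expanding the square directly would leave a cross term between the two brackets above that one would have to tame with a weighted Young's inequality (and then verify that the resulting contraction factor still stays below $1$), whereas the Minkowski route sidesteps this entirely and makes the uniformity in $n$ transparent.
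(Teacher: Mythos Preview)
Your proof is correct. The ingredients are exactly those of the paper---reduce via the Lipschitz property to controlling $\mathbb{E}\big[|\overline{X}_n(l)-\underline{X}_n(l)|^2\big]$, split into a ``same map, different points'' piece handled by (A\ref{ass:4}) and a ``same point, different maps'' piece handled by (A\ref{ass:5}), combine in $L^2$ via Minkowski, and sum a geometric series in $\sqrt{\tau}$---so the substance is the same.

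The organisation, however, differs. You set up a one-step recursion $\sqrt{D_n}\leq\sqrt{\tau}\,\sqrt{D_{n-1}}+\sqrt{C}\,h_l^{\beta/2}$ and iterate it. The paper instead writes the full telescoping decomposition
\[
\xi_l^n(x_0,U_{1:n})-\xi_{l-1}^n(x_0,U_{1:n})
=\sum_{k=1}^n\Big\{\xi_l^{n-k+1}\big(\xi_{l-1}^{k-1}(x_0,U_{1:k-1}),U_{k:n}\big)-\xi_l^{n-k}\big(\xi_{l-1}^{k}(x_0,U_{1:k}),U_{k+1:n}\big)\Big\},
\]
applies Minkowski once to the $n$-term sum, and then bounds the $k$th summand by peeling off the outer $n-k$ applications of $\xi_l$ via (A\ref{ass:4}) (producing $\tau^{n-k}$) before invoking (A\ref{ass:5}) on the innermost step. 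Unfolding your recursion reproduces exactly the same sum $\sum_{k=1}^n\tau^{(n-k)/2}$, so the two arguments are equivalent; your version is slightly more economical and makes the uniformity in $n$ immediate, while the paper's telescope makes the role of each time step explicit.
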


\begin{proof}
For notational simplicity, we drop the argument $l$ from the random variables $(\overline{X}_n(l),\underline{X}_n(l)$ $,U_n(l))$, i.e.~we just write
$(\overline{X}_n,\underline{X}_n,U_n)$. For $l\geq 0$ we denote the iterated map $\xi_l^n(X_0,U_{1:n}) = \xi_l(\xi_l^{n-1}(X_0,U_{1:n-1}),U_n)$,
with $n\geq 1$ and the convention that $\xi_l^{0}(X_0,U_{-1})=X_0$. Then we have
\begin{equation}\label{eq:h_prf4}
\mathbb{E}[(\varphi(\overline{X}_n)-\varphi(\underline{X}_n))^2] = \mathbb{E}[(\varphi(\xi_l^n(x_0,U_{1:n}))-\varphi(\xi_{l-1}^n(x_0,U_{1:n})))^2].
\end{equation}
Then one has the decomoposition
$$
\mathbb{E}[(\varphi(\xi_l^n(x_0,U_{1:n}))-\varphi(\xi_{l-1}^n(x_0,U_{1:n})))^2] =
$$
$$
\mathbb{E}\Big[\Big(\sum_{k=1}^n\{
\varphi(\xi_l^{n-k+1}(\xi_{l-1}^{k-1}(x_0,U_{1:k-1}),U_{k:n})) - 
\varphi(\xi_l^{n-k}(\xi_{l-1}^{k}(x_0,U_{1:k}),U_{k+1:n}))
\}\Big)^2\Big].
$$
Now, applying Minkowski and using the fact that $\varphi\in\textrm{Lip}(\mathsf{X})$
$$
\mathbb{E}[(\varphi(\xi_l^n(x_0,U_{1:n}))-\varphi(\xi_{l-1}^n(x_0,U_{1:n})))^2] \leq
$$
\begin{equation}\label{eq:h_prf1}
\|\varphi\|_{\textrm{Lip}}^2\Big(\sum_{k=1}^n\mathbb{E}\Big[\Big|\xi_l^{n-k+1}(\xi_{l-1}^{k-1}(x_0,U_{1:k-1}),U_{k:n})-\xi_l^{n-k}(\xi_{l-1}^{k}(x_0,U_{1:k}),U_{k+1:n})\Big|^2\Big]^{1/2}\Big)^2.
\end{equation}
Let $\mathcal{U}_n=\sigma(U_{1:n})$ (i.e.~the $\sigma-$algebra generated by $U_{1:n}$)
and consider the summand:
$$
\mathbb{E}\Big[\Big(\xi_l^{n-k+1}|\xi_{l-1}^{k-1}(x_0,U_{1:k-1}),U_{k:n})-\xi_l^{n-k}(\xi_{l-1}^{k}(x_0,U_{1:k}),U_{k+1:n})\Big|^2\Big] = 
$$
$$
\mathbb{E}\Big[\mathbb{E}[\int_{\mathsf{U}}|\xi_l(\xi_l^{n-k}(\xi_{l-1}^{k-1}(x_0,u_{1:k-1}),u_{k:n-1}),u_n)-\xi_l(\xi_l^{n-k-1}(\xi_{l-1}^{k}(x_0,u_{1:k}),u_{k+1:n-1}),u_n)|^2\times
$$
$$
\mu(du_n)|\mathcal{U}_{n-1}]\Big].
$$
Applying (A\ref{ass:4}) gives the upper-bound
$$
\mathbb{E}\Big[\Big|\xi_l^{n-k+1}|\xi_{l-1}^{k-1}(x_0,U_{1:k-1}),U_{k:n})-\xi_l^{n-k}(\xi_{l-1}^{k}(x_0,U_{1:k}),U_{k+1:n})\Big|^2\Big] \leq
$$
$$
\tau\mathbb{E}\Big[|\xi_l^{n-k}(\xi_{l-1}^{k-1}(x_0,U_{1:k-1}),U_{k:n-1})-\xi_l^{n-k-1}(\xi_{l-1}^{k}(x_0,U_{1:k}),U_{k+1:n-1})\Big|^2\Big].
$$
Thus, applying this argument recursively, yields
$$
\mathbb{E}\Big[\Big|\xi_l^{n-k+1}(\xi_{l-1}^{k-1}(x_0,U_{1:k-1}),U_{k:n})-\xi_l^{n-k}(\xi_{l-1}^{k}(x_0,U_{1:k}),U_{k+1:n})\Big|^2\Big] \leq
$$
\begin{equation}\label{eq:h_prf2}
\tau^{n-k}\mathbb{E}\Big[\Big|\xi_l(\xi_{l-1}^{k-1}(x_0,U_{1:k-1}),U_k) - \xi_{l-1}^{k}(x_0,U_{1:k})\Big|^2\Big].
\end{equation}
Then as
$$
\mathbb{E}\Big[\Big|\xi_l(\xi_{l-1}^{k-1}(x_0,U_{1:k-1}),U_k) - \xi_{l-1}^{k}(x_0,U_{1:k})\Big|^2\Big] =
$$
$$ 
\mathbb{E}\Big[\mathbb{E}\Big[\int_{\mathsf{U}}|\xi_l(\xi_{l-1}^{k-1}(x_0,u_{1:k-1}),u_k)-
\xi_{l-1}(\xi_{l-1}^{k-1}(x_0,u_{1:k-1}),u_k)
|^2\mu(du_k)\Big|\mathcal{U}_{k-1}\Big]\Big]
$$
applying (A\ref{ass:5}) yields
\begin{equation}\label{eq:h_prf3}
\mathbb{E}\Big[\Big|\xi_l(\xi_{l-1}^{k-1}(x_0,U_{1:k-1}),U_k) - \xi_{l-1}^{k}(x_0,U_{1:k})\Big|^2\Big] \leq C h_l^{\beta}.
\end{equation}
Combining \eqref{eq:h_prf3} with \eqref{eq:h_prf2} yields
$$
\mathbb{E}\Big[\Big|\xi_l^{n-k+1}|\xi_{l-1}^{k-1}(x_0,U_{1:k-1}),U_{k:n})-\xi_l^{n-k}(\xi_{l-1}^{k}(x_0,U_{1:k}),U_{k+1:n})\Big|^2\Big] \leq
C \tau^{n-k} h_l^{\beta}.
$$
Then returning to \eqref{eq:h_prf1}, we have shown that
\begin{eqnarray*}
\mathbb{E}[(\varphi(\xi_l^n(x_0,U_{1:n}))-\varphi(\xi_{l-1}^n(x_0,U_{1:n})))^2] & \leq & C\|\varphi\|_{\textrm{Lip}}^2(\sum_{k=1}^n\tau^{(n-k)/2})^2 h_l^{\beta}\\  & \leq & C\|\varphi\|_{\textrm{Lip}}^2h_l^{\beta}.
\end{eqnarray*}
Noting \eqref{eq:h_prf4} one can conclude the result.
\end{proof}

\begin{lem}\label{lem:final_tech_lemma}
Assume (A\ref{ass:1}-\ref{ass:5}). Then there exists a $C<\infty$, such that for any $l\geq 1$, $n\geq 1$, $\varphi\in\mathscr{B}_b(\mathsf{X})\cap\textrm{\emph{Lip}}(\mathsf{X})$
we have:
\begin{enumerate}
\item{
$
\mathbb{E}\Big[\Big(\widehat{\varphi}_l(\overline{X}_n(l))-\widehat{\varphi}_{l-1}(\underline{X}_n(l)) + K_l(\widehat{\varphi}_l)(\overline{X}_{n-1}(l)) - K_{l-1}(\widehat{\varphi}_{l-1})(\underline{X}_{n-1}(l))\Big)^2\Big] \leq C(\|\varphi\|\vee \|\varphi\|_{\textrm{\emph{Lip}}})^2 h_l^{\beta}.
$
}
\item{
$
\mathbb{E}[(K_l(\widehat{\varphi}_l)(\overline{X}_n(l))-K_{l-1}(\widehat{\varphi}_{l-1})(\underline{X}_n(l)))^2] \leq C(\|\varphi\|\vee \|\varphi\|_{\textrm{\emph{Lip}}})^2 h_l^{\beta}.
$
}
\end{enumerate}
\end{lem}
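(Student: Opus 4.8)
The plan is to reduce both claims to three estimates already in hand: the spatial Lipschitz bound on the Poisson solution (Lemma \ref{lem:pois_lip}), the cross-level bound $\sup_{x}|\widehat{\varphi}_l(x)-\widehat{\varphi}_{l-1}(x)|\le C\|\varphi\|h_l^{\beta}$ (Lemma \ref{lem:poisson_eq_cont}), and the $L^2$ coupling moment $\mathbb{E}[|\overline{X}_n(l)-\underline{X}_n(l)|^2]\le C h_l^{\beta}$, which is exactly what the proof of Lemma \ref{lem:sq_osc} produces when run with the coordinate maps (these are bounded and Lipschitz since $\mathsf{X}$ is compact). First I would record the uniform bound $\|\widehat{\varphi}_l\|\le C\|\varphi\|$, which follows from (A\ref{ass:1}) by summing $|K_l^n(\varphi)(x)-\pi_l(\varphi)|\le \textrm{Osc}(\varphi)\,C\rho^{n}$ over $n$; combined with Lemma \ref{lem:pois_lip} this gives $\|\widehat{\varphi}_l\|\vee\|\widehat{\varphi}_l\|_{\textrm{Lip}}\le C(\|\varphi\|\vee\|\varphi\|_{\textrm{Lip}})$, so that (A\ref{ass:3}) may legitimately be invoked with $\widehat{\varphi}_l$ in place of $\varphi$.

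I would prove part 2 first, since part 1 reuses it. Write
\[
K_l(\widehat{\varphi}_l)(\overline{X}_n)-K_{l-1}(\widehat{\varphi}_{l-1})(\underline{X}_n)
= \big[K_l(\widehat{\varphi}_l)(\overline{X}_n)-K_l(\widehat{\varphi}_l)(\underline{X}_n)\big]
+ K_l(\widehat{\varphi}_l-\widehat{\varphi}_{l-1})(\underline{X}_n)
+ (K_l-K_{l-1})(\widehat{\varphi}_{l-1})(\underline{X}_n).
\]
The first bracket is controlled by (A\ref{ass:3}) applied to $\widehat{\varphi}_l$ together with the bound of the previous paragraph, giving $\le C(\|\varphi\|\vee\|\varphi\|_{\textrm{Lip}})|\overline{X}_n-\underline{X}_n|$; squaring, taking expectations, and inserting $\mathbb{E}[|\overline{X}_n-\underline{X}_n|^2]\le C h_l^{\beta}$ yields the stated order. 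The second term is bounded in supremum norm by $\sup_x|\widehat{\varphi}_l(x)-\widehat{\varphi}_{l-1}(x)|\le C\|\varphi\|h_l^{\beta}$ because $K_l$ is a Markov kernel and hence a sup-norm contraction, while the third is bounded by (A\ref{ass:2}).2 applied to $\widehat{\varphi}_{l-1}$; both are uniform in the state, so their squares are $\le C\|\varphi\|^2 h_l^{2\beta}\le C\|\varphi\|^2 h_l^{\beta}$ since $h_l<1$. Assembling the three pieces with $(a+b+c)^2\le 3(a^2+b^2+c^2)$ gives part 2, and I note that the decomposition is valid for every $n\ge 0$ (at $n=0$ the states coincide and the first bracket vanishes).

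For part 1, I would split the quantity inside the square as
\[
\big[\widehat{\varphi}_l(\overline{X}_n)-\widehat{\varphi}_{l-1}(\underline{X}_n)\big]
+ \big[K_l(\widehat{\varphi}_l)(\overline{X}_{n-1})-K_{l-1}(\widehat{\varphi}_{l-1})(\underline{X}_{n-1})\big].
\]
The second bracket is precisely the quantity of part 2 evaluated at time $n-1$ (well defined for all $n\ge 1$ by the $n\ge 0$ remark above), so it is already $\le C(\|\varphi\|\vee\|\varphi\|_{\textrm{Lip}})^2 h_l^{\beta}$ in mean square. For the first bracket I would use the same add-and-subtract-$\underline{X}_n$ device, writing it as $[\widehat{\varphi}_l(\overline{X}_n)-\widehat{\varphi}_l(\underline{X}_n)]+[\widehat{\varphi}_l(\underline{X}_n)-\widehat{\varphi}_{l-1}(\underline{X}_n)]$, bounding the first summand via Lemma \ref{lem:pois_lip} together with the moment estimate, and the second (uniformly in the state) via Lemma \ref{lem:poisson_eq_cont}. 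A final application of Minkowski's inequality then combines the two brackets.

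The calculation is essentially bookkeeping once the three input estimates are available; the only point demanding care is that $\mathbb{E}[|\overline{X}_n-\underline{X}_n|^2]\le C h_l^{\beta}$ holds uniformly in $n$, which is exactly what the geometric factor $\big(\sum_{k}\tau^{(n-k)/2}\big)^2$ summing to a finite constant in the proof of Lemma \ref{lem:sq_osc} guarantees, and I would therefore quote that lemma at the level of the states rather than only after composing with a Lipschitz $\varphi$. I expect the main (mild) obstacle to be justifying the use of (A\ref{ass:3}) with argument $\widehat{\varphi}_l$, which is why establishing $\|\widehat{\varphi}_l\|\vee\|\widehat{\varphi}_l\|_{\textrm{Lip}}\le C(\|\varphi\|\vee\|\varphi\|_{\textrm{Lip}})$ at the outset is the key enabling step.
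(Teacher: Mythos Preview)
Your proposal is correct and follows essentially the same approach as the paper: both arguments reduce to the same three inputs (Lemma \ref{lem:pois_lip}, Lemma \ref{lem:poisson_eq_cont}, and the state-level $L^2$ coupling bound underlying Lemma \ref{lem:sq_osc}) via the same add-and-subtract telescoping, the only difference being organisational---you prove part 2 first and feed it into part 1, whereas the paper writes out a five-term decomposition for part 1 directly and then notes that part 2 amounts to the three ``$K$-terms'' of that decomposition at time $n$. Your explicit verification that $\widehat{\varphi}_l\in\mathscr{B}_b(\mathsf{X})\cap\textrm{Lip}(\mathsf{X})$ with level-free constants (so that (A\ref{ass:3}) and Lemma \ref{lem:sq_osc} may be applied with $\widehat{\varphi}_l$ in place of $\varphi$) is exactly the justification the paper also relies on.
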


\begin{proof}
As in the proof of Lemma \ref{lem:sq_osc}, we drop the argument $l$ from the random variables $(\overline{X}_n(l),\underline{X}_n(l))$.
We start with 1.. We have the decomposition
$$
\widehat{\varphi}_l(\overline{X}_n)-\widehat{\varphi}_{l-1}(\underline{X}_n) + K_l(\widehat{\varphi}_l)(\overline{X}_{n-1}) - K_{l-1}(\widehat{\varphi}_{l-1})(\underline{X}_{n-1}) = 
$$
$$
\widehat{\varphi}_l(\overline{X}_n) - \widehat{\varphi}_{l-1}(\overline{X}_n) + \widehat{\varphi}_{l-1}(\overline{X}_n)  
- \widehat{\varphi}_{l-1}(\underline{X}_n)
+ K_l(\widehat{\varphi}_l-\widehat{\varphi}_{l-1})(\overline{X}_{n-1}) +
$$
$$
K_l(\widehat{\varphi}_{l-1})(\overline{X}_{n-1}) - 
K_l(\widehat{\varphi}_{l-1})(\underline{X}_{n-1}) + K_l(\widehat{\varphi}_{l-1})(\underline{X}_{n-1}) - K_{l-1}(\widehat{\varphi}_{l-1})(\underline{X}_{n-1}).
$$
Then applying the $C_2-$inequality we have
$$
\mathbb{E}\Big[\Big(\widehat{\varphi}_l(\overline{X}_n(l))-\widehat{\varphi}_{l-1}(\underline{X}_n(l)) + K_l(\widehat{\varphi}_l)(\overline{X}_{n-1}(l)) - K_{l-1}(\widehat{\varphi}_{l-1})(\underline{X}_{n-1}(l))\Big)^2\Big] \leq C \sum_{j=1}^5 T_j
$$
where
\begin{eqnarray*}
T_1 & = & \mathbb{E}[(\widehat{\varphi}_l(\overline{X}_n) - \widehat{\varphi}_{l-1}(\overline{X}_n))^2] \\
T_2 & = & \mathbb{E}[(\widehat{\varphi}_{l-1}(\overline{X}_n)  
- \widehat{\varphi}_{l-1}(\underline{X}_n))^2] \\
T_3 & = & \mathbb{E}[K_l(\widehat{\varphi}_l-\widehat{\varphi}_{l-1})(\overline{X}_{n-1})^2] \\
T_4 & = & \mathbb{E}[(K_l(\widehat{\varphi}_{l-1})(\overline{X}_{n-1}) - 
K_l(\widehat{\varphi}_{l-1})(\underline{X}_{n-1}))^2] \\
T_5 & = & \mathbb{E}[(K_l(\widehat{\varphi}_{l-1})(\underline{X}_{n-1}) - K_{l-1}(\widehat{\varphi}_{l-1})(\underline{X}_{n-1}))^2].
\end{eqnarray*}
We need now bound each of the above terms. For $T_1$, applying Lemma \ref{lem:poisson_eq_cont} we have
$$
T_1 \leq C \|\varphi\|^2 h_l^{2\beta}.
$$
For $T_2$, by Lemma \ref{lem:pois_lip}, $\widehat{\varphi}_{l-1}\in\textrm{Lip}(\mathsf{X})$ with Lipschitz constant that is independent of $l$.
Similarly, by (A\ref{ass:1}) 
$$
|\widehat{\varphi}_{l-1}(x)| \leq \textrm{Osc}(\varphi)\sum_{n\geq 0}\Big|[K_{l-1}^n-\pi_{l-1}](x)\Big(\frac{\varphi}{\textrm{Osc}(\varphi)}\Big)\Big| \leq C\|\varphi\|
$$
so $\widehat{\varphi}_{l-1}\in\mathscr{B}_b(\mathsf{X})$ and that the sup-norm does not depend upon $l$.
Hence, applying Lemma \ref{lem:sq_osc}
$$
T_2 \leq C(\|\varphi\|\vee \|\varphi\|_{\textrm{Lip}})^2 h_l^{\beta}.
$$
For $T_3$, applying Lemma \ref{lem:poisson_eq_cont} we have
$$
T_3 \leq C \|\varphi\|^2 h_l^{2\beta}.
$$
For $T_4$, when $n\geq 2$, applying (A\ref{ass:3}) along with the same argument in the proof of Lemma \ref{lem:sq_osc} yields
$$
T_4 \leq C(\|\varphi\|\vee \|\varphi\|_{\textrm{Lip}})^2 h_l^{\beta}.
$$
If $n=1$ the result also holds as $T_4=0$. Finally for $T_5$ by (A\ref{ass:3}) 2.~we have
$$
T_5 \leq C \|\varphi\|^2 h_l^{2\beta}.
$$
Putting together the above arguments verifies 1..

For 2..~this can be upper-bounded by a constant a term which is similar to $\sum_{j=3}^5 T_j$, except that the time sub-script in each of the $T_j$ is $n$, not $n-1$.
Hence a similar proof can be constructed as for 1..~and is hence omitted.
\end{proof}

\begin{proof}[Proof of Proposition \ref{prop:marg}]
We have
$$
\mathbb{E}\Big[\Big(\frac{1}{N_l}\sum_{n=1}^{N_l}[\varphi(\overline{X}_n(l))-\varphi(\underline{X}_n(l))]-[\pi_l-\pi_{l-1}](\varphi)\Big)\Big]
$$
$$
= \frac{1}{N}\sum_{n=1}^{N_l}\{K_l^n(\varphi)(x_0) - K_{l-1}^n(\varphi)(x_0) - [\pi_l-\pi_{l-1}](\varphi)\}.
$$
Then as in Lemma \ref{lem:poisson_eq_cont}, using a similar result to \cite[Proposition C.2]{andrieu}, it follows
that
$$
\mathbb{E}\Big[\Big(\frac{1}{N_l}\sum_{n=1}^{N_l}[\varphi(\overline{X}_n(l))-\varphi(\underline{X}_n(l))]-[\pi_l-\pi_{l-1}](\varphi)\Big)\Big] = 
$$
$$
\frac{1}{N_l}
\sum_{n=1}^{N_l}\Big[
\sum_{i=0}^{n-1} [K_l^i-\pi_l]\big\{[K_l-K_{l-1}]\big([K_{l-1}^{n-i-1}-\pi_{l-1}](\varphi)\big)\big\}(x_0) -
[\pi_l-\pi_{l-1}]\{[K_{l-1}^n-\pi_{l-1}](\varphi)\}
\Big]
$$
The result follows from very similar calculations to those in the proof of Lemma \ref{lem:poisson_eq_cont} and is hence omitted.
\end{proof}

\begin{proof}[Proof of Theorem \ref{theo:main_thm}]
As in the proof of Lemma \ref{lem:sq_osc}, we drop the argument $l$ from the random variables $(\overline{X}_n(l),\underline{X}_n(l))$.
We have
$$
\mathbb{E}\Big[\Big(\frac{1}{N_l}\sum_{n=1}^{N_l}[\varphi(\overline{X}_n)-\varphi(\underline{X}_n)]-[\pi_l-\pi_{l-1}](\varphi)\Big)^2\Big] = 
$$
$$
\mathbb{E}\Big[\Big(\frac{1}{N_l}\sum_{n=1}^{N_l}[
\widehat{\varphi}_l(\overline{X}_n)-K_l(\widehat{\varphi}_l)(\overline{X}_{n})
-
 \{\widehat{\varphi}_{l-1}(\underline{X}_n) - K_{l-1}(\widehat{\varphi}_{l-1})(\underline{X}_{n})\}
]\Big)^2\Big] = 
$$
$$
\mathbb{E}\Big[\Big(
\frac{1}{N_l}\sum_{n=1}^{N_l}[
\widehat{\varphi}_l(\overline{X}_n)-K_l(\widehat{\varphi}_l)(\overline{X}_{n-1})
-
 \{\widehat{\varphi}_{l-1}(\underline{X}_n) - K_{l-1}(\widehat{\varphi}_{l-1})(\underline{X}_{n-1})\}]
$$
$$
+ R_l(N_l) - R_{l-1}(N_l)]\Big)^2\Big]
$$
where
\begin{eqnarray*}
R_l(N_l) & = & \frac{1}{N_l}[K_l(\widehat{\varphi}_l)(\overline{X}_{N_l})-K_l(\widehat{\varphi}_l)(x_{0})] \\
R_{l-1}(N_l) & = & \frac{1}{N_l}[K_{l-1}(\widehat{\varphi}_{l-1})(\underline{X}_{N_l})-K_{l-1}(\widehat{\varphi}_{l-1})(x_{0})]
\end{eqnarray*}
Set
$$
M_{N_l} = \sum_{n=1}^{N_l}[
\widehat{\varphi}_l(\overline{X}_n)-K_l(\widehat{\varphi}_l)(\overline{X}_{n-1})
-
 \{\widehat{\varphi}_{l-1}(\underline{X}_n) - K_{l-1}(\widehat{\varphi}_{l-1})(\underline{X}_{n-1})\}].
$$
Then, applying the $C_2-$inequality one has
\begin{equation}\label{eq:main_theo1}
\mathbb{E}\Big[\Big(\frac{1}{N_l}\sum_{n=1}^{N_l}[\varphi(\overline{X}_n)-\varphi(\underline{X}_n)]-[\pi_l-\pi_{l-1}](\varphi)\Big)^2\Big] \leq C \Big(\frac{1}{N_l^2}\mathbb{E}[M_{N_l}^2] + 
\mathbb{E}[((R_l(N_l) - R_{l-1}(N_l))^2] \Big).
\end{equation}
We deal with the two terms on the R.H.S.~separately.

\noindent\textbf{Term:} $\frac{1}{N_l^2}\mathbb{E}[M_{N_l}^2]$.
Let, for $n\geq 0$
$$
D_n = \Big[\widehat{\varphi}_l(\overline{X}_n)-\widehat{\varphi}_{l-1}(\underline{X}_n) + K_l(\widehat{\varphi}_l)(\overline{X}_{n-1}) - K_{l-1}(\widehat{\varphi}_{l-1})(\underline{X}_{n-1})\Big]
$$
with the convention that $D_0 = 0$.
Denoting $\mathcal{F}_n=\sigma(\overline{X}_{0:n},\underline{X}_{0:n})$ we can easily verify that $(M_n,\mathcal{F}_n)$ is a Martingale. So applying the Burkholder-Gundy-Davis inequality
we have
$$
\frac{1}{N_l^2}\mathbb{E}[M_{N_l}^2] \leq \frac{1}{N_l^2}\mathbb{E}\Big[\sum_{n=1}^{N_l}\{D_n-D_{n-1}\}^2\Big].
$$
By the $C_2-$inequality
$$
\frac{1}{N_l^2}\mathbb{E}[M_{N_l}^2] \leq \frac{2}{N_l^2}\sum_{n=1}^{N_l}\{\mathbb{E}[D_n^2] + \mathbb{E}[D_{n-1}^2]\}.
$$
Then applying Lemma \ref{lem:final_tech_lemma} 1.~we have
\begin{equation}\label{eq:main_theo2}
\frac{1}{N_l^2}\mathbb{E}[M_{N_l}^2] \leq \frac{C  (\|\varphi\|\vee \|\varphi\|_{\textrm{\textrm{Lip}}})^2 h_l^{\beta}}{N_l}.
\end{equation}

\noindent\textbf{Term:} $\mathbb{E}[((R_l(N_l) - R_{l-1}(N_l))^2]$
We have 
$$
\mathbb{E}[((R_l(N_l) - R_{l-1}(N_l))^2]  = 
$$
$$
\frac{1}{N_l^2}\mathbb{E}\Big[\Big(K_l(\widehat{\varphi}_l)(\overline{X}_{N_l})-K_{l-1}(\widehat{\varphi}_{l-1})(\underline{X}_{N_l}) +
K_{l-1}(\widehat{\varphi}_{l-1})(x_{0}) - K_l(\widehat{\varphi}_l)(x_{0})
\Big)^2\Big] \leq
$$
$$
\frac{1}{N_l^2}\Big(
\mathbb{E}\Big[\Big(K_l(\widehat{\varphi}_l)(\overline{X}_{N_l})-K_{l-1}(\widehat{\varphi}_{l-1})(\underline{X}_{N_l})\Big)^2\Big]
+
(K_{l-1}(\widehat{\varphi}_{l-1})(x_{0}) - K_l(\widehat{\varphi}_l)(x_{0}))^2
\Big). 
$$
Then applying Lemma \ref{lem:final_tech_lemma} 2.~we have
$$
\mathbb{E}[((R_l(N_l) - R_{l-1}(N_l))^2] \leq 
$$
$$
\frac{1}{N_l^2}\Big(C(\|\varphi\|\vee \|\varphi\|_{\textrm{Lip}})^2 h_l^{\beta} +
(K_{l-1}(\widehat{\varphi}_{l-1}-\widehat{\varphi}_l))(x_{0}) +   K_{l-1}(\widehat{\varphi}_l)(x_{0})- K_l(\widehat{\varphi}_l)(x_{0}))^2
\Big).
$$
Then by Lemma \ref{lem:poisson_eq_cont} and (A\ref{ass:2}) 2.~it follows that
$$
(K_{l-1}(\widehat{\varphi}_{l-1}-\widehat{\varphi}_l))(x_{0}) +   K_{l-1}(\widehat{\varphi}_l)(x_{0})- K_l(\widehat{\varphi}_l)(x_{0}))^2 \leq
C \|\varphi\|^2 h_l^{2\beta}.
$$
Hence we can conclude that
\begin{equation}\label{eq:main_theo3}
\mathbb{E}[((R_l(N_l) - R_{l-1}(N_l))^2] \leq \frac{1}{N_l^2} C(\|\varphi\|\vee \|\varphi\|_{\textrm{Lip}})^2 h_l^{\beta}.
\end{equation}
The proof is completed by combining \eqref{eq:main_theo2} and \eqref{eq:main_theo3} with \eqref{eq:main_theo1}.
\end{proof}

\section{Verifying the Assumptions}\label{app:verify}

As in the previous appendix, throughout $C$ is a finite constant which may change from line-to-line,
but will not depend on any important parameters (e.g.~$l$, $n$) unless stated.

\begin{proof}[Proof of Proposition \ref{prop:verify}]
We establish (A\ref{ass:1}-\ref{ass:5}) sequentially.

\noindent\textbf{(A\ref{ass:1})}. We note that for any fixed $l$, $i$, via (B\ref{ass:verify}) 1.
$$
\pi_l(x_i|x_{-i}) = \frac{\pi(x_{1:k}}{\int_{\mathsf{X}_i}\pi_l(x_{1:k})\nu_i(dx_i)} \geq \frac{\underline{C}}{\overline{C}\int_{\mathsf{X}_i}\nu_i(dx)}
$$
where $x_{-i}=(x_{1:i-1},x_{i+1:k})$.
So clearly
$$
K_l(x_{1:k},dx_{1:k}') \geq \Big(\frac{\underline{C}}{\overline{C}}\Big)^k \prod_{i=1}^k\frac{1}{\int_{\mathsf{X}_i}\nu_i(dx)}\nu(dx_i).
$$
Hence $K_l$ is uniformly ergodic and the convergence rate is independent of $l$, so (A\ref{ass:1}) is satisfied.

\noindent\textbf{(A\ref{ass:3})}. We have the decomposition, for any fixed $l$, $x_{1:k},y_{1:k}$, $\varphi\in\mathscr{B}_b(\mathsf{X})\cap\textrm{Lip}(\mathsf{X})$
$$
\int_{\mathsf{X}}\varphi(x_{1:k}')[\prod_{j=1}^k \pi_l(x_j'|x_{1:j-1}',x_{j+1:k})-\prod_{j=1}^k \pi_l(x_j'|x_{1:j-1}',y_{j+1:k})] \nu(dx_{1:k}') = 
$$
$$
\int_{\mathsf{X}}\varphi(x_{1:k}') \Big(\sum_{s=1}^k\Big\{\Big(\prod_{j=1}^{s-1}\pi_l(x_j'|x_{1:j-1}',y_{j+1:k})\Big)\Big(\prod_{j=s}^{k}\pi_l(x_j'|x_{1:j-1}',x_{j+1:k})\Big) - 
$$
\begin{equation}\label{eq:main_decomp_verify_a2}
\Big(\prod_{j=1}^{s}\pi_l(x_j'|x_{1:j-1}',y_{j+1:k})\Big)\Big(\prod_{j=s+1}^{k}\pi_l(x_j'|x_{1:j-1}',x_{j+1:k})\Big)\Big\}\Big)\nu(dx_{1:k}').
\end{equation}
The difference in the summand in the integrand is
$$
\Big(\prod_{j=1}^{s-1}\pi_l(x_j'|x_{1:j-1}',y_{j+1:k})\Big)\Big[\pi_l(x_s'|x_{1:s-1}',x_{s+1:k})-\pi_l(x_s'|x_{1:s-1}',y_{s+1:k})\Big]
\Big(\prod_{j=s+1}^{k}\pi_l(x_j'|x_{1:j-1}',x_{j+1:k})\Big).
$$
Then we note
$$
\Big[\pi_l(x_s'|x_{1:s-1}',x_{s+1:k})-\pi_l(x_s'|x_{1:s-1}',y_{s+1:k})\Big] = 
$$
$$
\frac{\pi_l(x_{1:s}',x_{s+1:k})-\pi_l(x_{1:s}',y_{s+1:k})}{\pi_l(x_{1:s-1}',x_{s+1:k})} +
\pi_l(x_{1:s}',y_{s+1:k})\Big[\frac{\pi_l(x_{1:s-1}',y_{s+1:k})-\pi_l(x_{1:s-1}',x_{s+1:k})}{\pi_l(x_{1:s-1}',y_{s+1:k})\pi_l(x_{1:s-1}',x_{s+1:k})}\Big].
$$
Now by using (B\ref{ass:verify}) 1.~and (B\ref{ass:verify}) 2.:
$$
\Big|\frac{\pi_l(x_{1:s}',x_{s+1:k})-\pi_l(x_{1:s}',y_{s+1:k})}{\pi_l(x_{1:s-1}',x_{s+1:k})}\Big| \leq \frac{C}{\underline{C}\int_{\mathsf{X}_s}\nu_s(dx)}|x_{s+1:k}-y_{s+1:k}| \leq C |x_{1:k}-y_{1:k}|.
$$
By using (B\ref{ass:verify}) 1.:
$$
\Big|
\pi_l(x_{1:s}',y_{s+1:k})\Big[\frac{\pi_l(x_{1:s-1}',y_{s+1:k})-\pi_l(x_{1:s-1}',x_{s+1:k})}{\pi_l(x_{1:s-1}',y_{s+1:k})\pi_l(x_{1:s-1}',x_{s+1:k})}\Big]
\Big| \leq 
$$
$$
\frac{\overline{C}}{\underline{C}^2\int_{\mathsf{X}_s}\nu_s(dx)}|\int_{\mathsf{X}_s}[\pi_l(x_{1:s}',y_{s+1:k})-\pi_l(x_{1:s}',x_{s+1:k})\nu_s(dx_s')]|.
$$
Then by using (B\ref{ass:verify}) 2.~it easily follows that 
$$
\Big|
\pi_l(x_{1:s}',y_{s+1:k})\Big[\frac{\pi_l(x_{1:s-1}',y_{s+1:k})-\pi_l(x_{1:s-1}',x_{s+1:k})}{\pi_l(x_{1:s-1}',y_{s+1:k})\pi_l(x_{1:s-1}',x_{s+1:k})}\Big]
\Big| \leq 
C |x_{1:k}-y_{1:k}|.
$$
So in summary, we have established that
$$
\Big|\pi_l(x_s'|x_{1:s-1}',x_{s+1:k})-\pi_l(x_s'|x_{1:s-1}',y_{s+1:k})\Big|\leq C |x_{1:k}-y_{1:k}|
$$
and hence using (B\ref{ass:verify}) 1. we can show that
$$
\Big|\Big(\prod_{j=1}^{s-1}\pi_l(x_j'|x_{1:j-1}',y_{j+1:k})\Big)\Big[\pi_l(x_s'|x_{1:s-1}',x_{s+1:k})-\pi_l(x_s'|x_{1:s-1}',y_{s+1:k})\Big]\times
$$
$$
\Big(\prod_{j=s+1}^{k}\pi_l(x_j'|x_{1:j-1}',x_{j+1:k})\Big)\Big|\leq C |x_{1:k}-y_{1:k}|.
$$
Recalling \eqref{eq:main_decomp_verify_a2} it easily follows that
$$
\Big|\int_{\mathsf{X}}\varphi(x_{1:k}')[\prod_{j=1}^k \pi_l(x_j'|x_{1:j-1}',x_{j+1:k})-\prod_{j=1}^k \pi_l(x_j'|x_{1:j-1}',y_{j+1:k})] \nu(dx_{1:k}')\Big| \leq 
$$
$$
C (\|\varphi\|\vee \|\varphi\|_{\textrm{Lip}})|x_{1:k}-y_{1:k}|
$$
which verifies (A\ref{ass:3}).

\noindent\textbf{(A\ref{ass:2})}. For (A\ref{ass:2}) 1.~this follows almost immediately from (B\ref{ass:verify}) 3.~and is omitted. For 
(A\ref{ass:2}) 2.~we note that
for any fixed $l\geq 1$, $x_{1:k}$, $\varphi\in\mathscr{B}_b(\mathsf{X})$
$$
\int_{\mathsf{X}}\varphi(x_{1:k}')[\prod_{j=1}^k \pi_l(x_j'|x_{1:j-1}',x_{j+1:k})-\prod_{j=1}^k \pi_{l-1}(x_j'|x_{1:j-1}',x_{j+1:k})] \nu(dx_{1:k}') = 
$$
$$
\int_{\mathsf{X}}\varphi(x_{1:k}') \Big(\sum_{s=1}^k\Big\{\Big(\prod_{j=1}^{s-1}\pi_{l-1}(x_j'|x_{1:j-1}',x_{j+1:k})\Big)\Big(\prod_{j=s}^{k}\pi_l(x_j'|x_{1:j-1}',x_{j+1:k})\Big) - 
$$
$$
\Big(\prod_{j=1}^{s}\pi_{l-1}(x_j'|x_{1:j-1}',x_{j+1:k})\Big)\Big(\prod_{j=s+1}^{k}\pi_l(x_j'|x_{1:j-1}',x_{j+1:k})\Big)\Big\}\Big)\nu(dx_{1:k}').
$$
Then a similar argument to the proof of (A\ref{ass:3}) can be adopted, except using  (B\ref{ass:verify}) 3.~instead of  (B\ref{ass:verify}) 2.; hence the proof is omitted.

\noindent\textbf{(A\ref{ass:4})}. We note that the update of the $i^{th}$ co-ordinate, given $x_{1:k}$, in our Gibbs sampler can be written as (omitting the argument $l$ in $X$)
$$
\overline{X}_i' = T_{l,i}([T_{l,1:i-1}^x,x_{i+1:k}],U_i)
$$
where for simplicity of notation, the arguments of $T_{l,1:i-1}$ are omitted and we use the superscript $x$ to denote conditoning on $x_{1:k}$. Then we have
that for any $l\geq 0$, $x_{1:k},y_{1:k},U_{1:k}$
$$
|\xi_l(x_{1:k},U_{1:k}) - \xi_{l}(y_{1:k},U_{1:k})|^2  = \sum_{i=1}^k |T_{l,i}([T_{l,1:i-1}^x,x_{i+1:k}],U_i)-T_{l,i}([T_{l,1:i-1}^y,y_{i+1:k}],U_i)|^2.
$$
Now applying (B\ref{ass:verify}) 4.~we have
$$
\sum_{i=1}^k |T_{l,i}([T_{l,1:i-1}^x,x_{i+1:k}],U_i)-T_{l,i}([T_{l,1:i-1}^y,y_{i+1:k}],U_i)|^2 \leq
$$
$$
\tau |x_{2:k}-y_{2:k}|^2 + \sum_{i=2}^k |T_{l,i}([T_{l,1:i-1}^x,x_{i+1:k}],U_i)-T_{l,i}([T_{l,1:i-1}^y,y_{i+1:k}],U_i)|^2.
$$
Applying (B\ref{ass:verify}) 4.~again we have
$$
\sum_{i=1}^k |T_{l,i}([T_{l,1:i-1}^x,x_{i+1:k}],U_i)-T_{l,i}([T_{l,1:i-1}^y,y_{i+1:k}],U_i)|^2 \leq
\tau |x_{2:k}-y_{2:k}|^2 + 
$$
$$
\tau(\tau |x_{2:k}-y_{2:k}|^2  + |x_{3:k}-y_{3:k}|^2)
+
\sum_{i=3}^k |T_{l,i}([T_{l,1:i-1}^x,x_{i+1:k}],U_i)-T_{l,i}([T_{l,1:i-1}^y,y_{i+1:k}],U_i)|^2 \leq
$$
$$
2(\tau+\tau^2)|x_{2:k}-y_{2:k}|^2+\sum_{i=3}^k |T_{l,i}([T_{l,1:i-1}^x,x_{i+1:k}],U_i)-T_{l,i}([T_{l,1:i-1}^y,y_{i+1:k}],U_i)|^2.
$$
Recursively applying the same argument yields
$$
\sum_{i=1}^k |T_{l,i}([T_{l,1:i-1}^x,x_{i+1:k}],U_i)-T_{l,i}([T_{l,1:i-1}^y,y_{i+1:k}],U_i)|^2 \leq
(k\sum_{i=1}^k\tau^i) |x_{2:k}-y_{2:k}|^2 \leq 
$$
$$
(k\sum_{i=1}^k\tau^i) |x_{1:k}-y_{1:k}|^2.
$$
As $\tau<1/k$ it easily follows that (A\ref{ass:4}) is verified.

\noindent\textbf{(A\ref{ass:5})}. We have that for any $l\geq 1$, $x_{1:k},U_{1:k}$
$$
|\xi_l(x_{1:k},U_{1:k}) - \xi_{l-1}(x_{1:k},U_{1:k})|^2  = \sum_{i=1}^k |T_{l,i}([T_{l,1:i-1},x_{i+1:k}],U_i)-T_{l-1,i}([T_{l-1,1:i-1},x_{i+1:k}],U_i)|^2.
$$
where we have removed the superscripts from the $T_l,T_{l-1}$. 
Our proof will be via strong induction.

Now consider the first term in the sum, 
$$
|T_{l,1}([x_{2:k}],U_1)-T_{l-1,1}([x_{2:k}],U_1)|^2 \leq C h_l^{\beta}
$$
by (B\ref{ass:verify}) 5.. Now for the second summand
$$
|T_{l,2}([T_{l,1},x_{3:k}],U_2)-T_{l-1,2}([T_{l-1,1},x_{3:k}],U_2)|^2 \leq
$$
$$
|T_{l,2}([T_{l,1},x_{3:k}],U_2)-T_{l-1,2}([T_{l,1},x_{3:k}],U_2)|^2 +
|T_{l-1,2}([T_{l,1},x_{3:k}],U_2) -T_{l-1,2}([T_{l-1,1},x_{3:k}],U_2)|^2
$$
Now, using (B\ref{ass:verify}) 5.~for the first term on the R.H.S.~and
(B\ref{ass:verify}) 4.~for the second term
$$
|T_{l,2}([T_{l,1},x_{3:k}],U_2)-T_{l-1,2}([T_{l-1,1},x_{3:k}],U_2)|^2 \leq
$$
$$
C h_l^{\beta} + \tau|T_{l,1}([x_{2:k}],U_1)-T_{l-1,1}([x_{2:k}],U_1)|^2
$$
and then by (B\ref{ass:verify}) 5.:
$$
|T_{l,2}([T_{l,1},x_{3:k}],U_2)-T_{l-1,2}([T_{l-1,1},x_{3:k}],U_2)|^2 \leq C h_l^{\beta}.
$$
Now let us suppose for $j\in\{1,\dots,i-1\}$
$$
|T_{l,j}([T_{l,1:j-1},x_{j+1:k}],U_j)-T_{l-1,j}([T_{l-1,1:j-1},x_{j+1:k}],U_j)|^2\leq C h_l^{\beta}.
$$
Let us consider the $i^{th}$ term
$$
|T_{l,i}([T_{l,1:i-1},x_{i+1:k}],U_i)-T_{l-1,i}([T_{l-1,1:i-1},x_{i+1:k}],U_i)|^2 \leq
$$
$$
|T_{l,i}([T_{l,1:i-1},x_{i+1:k}],U_i)-T_{l-1,i}([T_{l,1:i-1},x_{i+1:k}],U_i)|^2 +
$$
$$ 
|T_{l-1,i}([T_{l,1:i-1},x_{i+1:k}],U_i)-T_{l-1,i}([T_{l-1,1:i-1},x_{i+1:k}],U_i)|^2.
$$
Again, using (B\ref{ass:verify}) 5.~for the first term on the R.H.S.~and
(B\ref{ass:verify}) 4.~for the second term
$$
|T_{l,i}([T_{l,1:i-1},x_{i+1:k}],U_i)-T_{l-1,i}([T_{l-1,1:i-1},x_{i+1:k}],U_i)|^2 \leq
$$
$$
C h_l^{\beta} + \tau\sum_{j=1}^{i-1}|T_{l,j}([T_{l,1:j-1},x_{j+1:k}],U_j)-T_{l-1,j}([T_{l-1,1:j-1},x_{j+1:k}],U_j)|^2.
$$
Applying the induction hypothesis allows us to conclude that
$$
\sum_{i=1}^k |T_{l,i}([T_{l,1:i-1},x_{i+1:k}],U_i)-T_{l-1,i}([T_{l-1,1:i-1},x_{i+1:k}],U_i)|^2 \leq C h_l^{\beta}
$$
so easily follows that (A\ref{ass:5}) is verified.
\end{proof}

\end{document}